\crefname{hypothesis}{Hypothesis}{Hypotheses}
\setlist[enumerate]{leftmargin=.5in}\setlist[itemize]{leftmargin=.5in}
\Crefname{ALC@unique}{Line}{Lines}
\colorlet{texcscolor}{blue!50!black}
\colorlet{texemcolor}{red!70!black}
\colorlet{texpreamble}{red!70!black}
\colorlet{codebackground}{black!25!white!25}
\lstdefinestyle{siamlatex}{%
  style=tcblatex,
  texcsstyle=*\color{texcscolor},
  texcsstyle=[2]\color{texemcolor},
  keywordstyle=[2]\color{texemcolor},
  moretexcs={cref,Cref,maketitle,mathcal,text,headers,email,url},
}
\DeclareTotalTCBox{\code}{ v O{} }
{ %fontupper=\ttfamily\color{texemcolor},
  fontupper=\ttfamily\color{black},
  nobeforeafter,
  tcbox raise base,
  colback=codebackground,colframe=white,
  top=0pt,bottom=0pt,left=0mm,right=0mm,
  leftrule=0pt,rightrule=0pt,toprule=0mm,bottomrule=0mm,
  boxsep=0.5mm,
  #2}{#1}
\title{\MakeLowercase{$hp$}-Multilevel Monte Carlo Methods for Uncertainty Quantification of Compressible Navier-Stokes Equations\thanks{Submitted to the editors \today.
\funding{The authors thank the Baden-W\"urttemberg Stiftung for financial support via the project ``BW-HPC2: SEAL'' and the High-Performance Computing Center Stuttgart (HLRS) 
for the provided computing resources.}}}
\author{Andrea Beck\thanks{Institute of Aerodynamics and Gas Dynamics, University of Stuttgart, Pfaffenwaldring 21, 70569
Stuttgart, Germany.}
\and Jakob D\"{u}rrw\"{a}chter$^\dagger$
\and Thomas Kuhn$^\dagger$
\and Fabian Meyer\thanks{Institute of Applied Analysis and Numerical Simulation, University of Stuttgart, Pfaffenwaldring 57,
70569 Stuttgart, Germany. (\email{fabian.meyer@mathematik.uni-stuttgart.de)}}
\and Claus-Dieter Munz$^\dagger$
\and Christian Rohde$^\ddagger$
}
\begin{document}
\maketitle

%% ------------------------------------------------------------------
%% ABSTRACT
%% ------------------------------------------------------------------
\begin{tcbverbatimwrite}{tmp_\jobname_abstract.tex}
\begin{abstract} 
  We propose a novel $hp$-multilevel Monte Carlo method for the
  quantification of uncertainties in the compressible Navier-Stokes
  equations, using the Discontinuous Galerkin method
  as deterministic solver. The  multilevel approach exploits hierarchies 
  of uniformly refined meshes while simultaneously
  increasing the polynomial degree of the ansatz space. It allows for 
  a very large range of resolutions in the physical space and thus 
  an efficient decrease
  of the statistical error.
  We prove that the overall complexity of the $hp$-multilevel Monte Carlo method
  to compute the mean field
  with prescribed accuracy is, in best-case,  
  of quadratic order with respect to the accuracy.
  We also propose
  a novel and simple approach to estimate a lower confidence bound for the optimal number of
  samples per level, which helps to prevent
  overestimating these quantities.
  The method is in particular designed for application
  on queue-based computing systems, where
  it is desirable to compute a large number of samples during one
  iteration, without overestimating the optimal number of samples.
  Our theoretical results are verified by numerical experiments
  for the two-dimensional compressible Navier-Stokes equations.
  In particular we consider a cavity flow problem from
  computational acoustics, demonstrating
  that the method is suitable to handle complex engineering problems.
\end{abstract}
\begin{keywords}
  Uncertainty Quantification; Multilevel Monte Carlo; Discontinuous Galerkin;
  Random Navier-Stokes Equations
\end{keywords}
\begin{AMS}
  35R60; 65C05; 65M60
\end{AMS}
\end{tcbverbatimwrite}
\input{tmp_\jobname_abstract.tex}
%% ------------------------------------------------------------------
%% END HEADER
%% ------------------------------------------------------------------

%%%%%%%%%%%%%%%%%%%%%%%%%%%%%%%%%%%%%%%%%%%%%%%%%%%%%%%%%%%%%%%%%%%%%%%%%%%%%%%%%%%%%%%%%%%%%%%%%%%%%%%%%%%%%%%%%%%%%%%%%%%%%%%%%%%%%%%%%%%%%%%%%%%%%%%%%%%%%%
%SECTION
%
%%%%%%%%%%%%%%%%%%%%%%%%%%%%%%%%%%%%%%%%%%%%%%%%%%%%%%%%%%%%%%%%%%%%%%%%%%%%%%%%%%%%%%%%%%%%%%%%%%%%%%%%%%%%%%%%%%%%%%%%%%%%%%%%%%%%%%%%%%%%%%%%%%%%%%%%%%%%%%
\section{Introduction}
Due to the continuous improvement of computer-processing capacities, the demand for highly accurate numerical simulations
which also account for uncertain input parameters is growing.
Uncertainties might arise from limitations in measuring physical phenomena exactly
or from a systematical absence of knowledge about the underlying physical processes.
Uncertainty Quantification (UQ) addresses this issue and provides mathematical methods to quantify the influence of uncertain
input parameters on the numerical solution itself or on derived quantities of interest.
There exist two major approaches for UQ. On the one hand, non-statistical approaches
like the intrusive and non-intrusive polynomial chaos expansion approximate the underlying random field
by a series of polynomials and derive deterministic models for the stochastic modes.
On the other hand, statistical approaches such as Monte Carlo (MC) type methods sample the random space to obtain statistical
information,
like mean, variance or higher order moments of the corresponding random field. Especially MC
type methods are very popular as they are easy to implement and only require a deterministic black box solver.
Moreover, in contrast to non-statistical approaches, the MC method does not rely on 
the regularity of the underlying random
field, rendering it a very robust method.
However, the convergence of MC methods is dictated by the law of large numbers, hence relatively slow and therefore
computationally expensive.

To overcome these difficulties Heinrich \cite{Heinrich2001} and later Giles \cite{Giles2008}
extended the MC method to the Multilevel Monte Carlo (MLMC) method, where they considered different mesh hierarchies instead
of one fixed mesh,
to discretize the deterministic equation of interest.
The MLMC method relies on the idea that the global behavior of the exact expectation can be approximated by the behavior of
the expectation of numerical solutions with a low spatial resolution,
 which can be computed at low cost.
The coarse expectation is then subsequently corrected by computations on finer meshes, which are computationally more expensive
per sample.
The number of these simulations at full resolution is significantly reduced compared to the original MC method
resulting in a considerably lower overall computational cost.
Since its development the MLMC method has been very successfully applied to UQ for many different
partial differential equations  with uncertainties, as for example in
\cite{BarthMLMCHyperbolic,BarthMLMCParabolic,BarthMLMC,CliffeScheichl2011,NobileCMLMC,
HoelKrumscheid2019,MishraSchwabSukysNonlinearSystems,AppeloeMOMC}.
We refer to \cite{ChenMing2018,Nobile2017} for applications of the MLMC method 
in computational
fluid dynamics, especially to aerodynamics and meteorology. 
A generalization of
the MLMC method, named Multiindex Monte Carlo (MIMC) method, has been presented in \cite{TemponeNobile2016}. 
In contrast to the MLMC method, which computes mean and variance 
using first-order differences, the
MIMC  method uses high-order differences which allows for 
a faster decay of the corresponding level variances, resulting in significant efficiency gains
compared to the standard MLMC method.

In \cite{AppeloeMOMC} the authors extended the MLMC method for hyperbolic problems 
to a Multiorder Monte Carlo method (MOMC),
using an energy-preserving Discontinuous Galerkin (DG) scheme
for the elastic wave equation, which we will dub 
$p$-MLMC for the remainder of this article.
The authors considered either mesh refinements ($h$-refinement), 
or increased the DG polynomial degree ($p$-refinement)
to obtain a hierarchy of different levels.
Furthermore, they proved  that the computational complexity to reach
a prescribed accuracy is of quadratic order with respect to the accuracy.
However, a proof for the computational complexity 
for a hierarchy of $hp$-refined meshes is still open.
We therefore extend the $h$- and $p$-MLMC method to an $hp$-MLMC method, 
where we uniformly refine the physical mesh and  at the same time uniformly increase the DG
polynomial degree. 
As the first new contribution of this work,
we extend the complexity analysis from \cite{CliffeScheichl2011, AppeloeMOMC} 
to  arbitrarily $hp$-refined meshes and show
that the $hp$-MLMC method is - up to a constant - as efficient as
the  $h$-MLMC and the $p$-MLMC method.
The $hp$ mesh hierarchy enables us to cover a very large range of resolution levels, 
which is crucial for the efficiency of the $hp$-MLMC method. 
Furthermore, from a numerical point of view, a low polynomial degree 
(resulting in a more dissipative numerical scheme) might be favorable 
in connection with coarse meshes, where an insufficient resolution
can otherwise lead to unphysical oscillations, whereas a high 
polynomial degree yields higher accuracy when fine meshes are employed.
The $hp$-MLMC method becomes an attractive alternative for mesh-based MLMC methods if 
 uniform mesh  hierarchies are not available. This might occur for complex domain geometries or 
	when using dynamical mesh adaption. In view of the recent rise of higher-order schemes it seems likely 
	to exploit a mixture method. 
In contrast to $h$-refinement, $hp$-refinement 
introduces more degrees of freedom 
in designing a suitable mesh hierarchy.

For complex fluid dynamical problems, e.g. direct numerical simulations of
unsteady, compressible Navier-Stokes equations, it is inevitable 
to use large-scale computing systems.
These systems are in most cases equipped with a queuing system. 
When using the $hp$-MLMC method on queue-based, large-scale computing systems, 
due to long queuing times, it is desirable to compute as many samples as possible
per iteration. On the other hand, to avoid unnecessary computations and 
to increase the efficiency of the $hp$-MLMC method, one is interested to not overestimate
the optimal number of samples per iteration.
Therefore, as the second novel 
contribution, we show how to construct easily a robust lower confidence bound for the number of
samples per level, which avoids overshooting the optimal number of samples,
but still yields a reasonably large number of additional samples per iteration.
To demonstrate the efficiency of the $hp$-MLMC method combined with the novel sample estimator 
we apply our method to
two different compressible flow problems, a benchmark problem with smooth solution
and an open cavity flow problem. The latter is an important
problem in computational acoustics that exhibits physical phenomena with high sensitivity
with respect to the problem parameters \cite{Kuhn2018}. 
Moreover, we provide for both problems a thorough  comparison of the  $h$-, $p$- and $hp$-MLMC methods.

This article is structured as follows.
In \secref{sec:prelim} we introduce the necessary mathematical framework and briefly introduce the DG method.
In \secref{sec:MLMC} we describe the $hp$-MLMC method and prove the stated complexity result.
We also discuss the necessity of confidence intervals for the estimate of the optimal number of samples
when working on queue-based large-scale computing systems.
Finally, in \secref{sec:numerics} we apply our method to two different examples
and verify our theoretical results.
%%%%%%%%%%%%%%%%%%%%%%%%%%%%%%%%%%%%%%%%%%%%%%%%%%%%%%%%%%%%%%%%%%%%%%%%%%%%%%%%%%%%%%%%%%%%%%%%%%%%%%%%%%%%%%%%%%%%%%%%%%%%%%%%%%%%%%%%%%%%%%%%%%%%%%%%%%%%%%
%SECTION
%
%%%%%%%%%%%%%%%%%%%%%%%%%%%%%%%%%%%%%%%%%%%%%%%%%%%%%%%%%%%%%%%%%%%%%%%%%%%%%%%%%%%%%%%%%%%%%%%%%%%%%%%%%%%%%%%%%%%%%%%%%%%%%%%%%%%%%%%%%%%%%%%%%%%%%%%%%%%%%%
\section{Notation and Preliminaries} \label{sec:prelim}
\subsection{A Primer on Probability Theory}
We let $(\Omega, \F,\P)$ be a probability space, where $\Omega$ is the set of all elementary events $\omega \in \Omega$, $\F$
is a $\sigma$-algebra
on $\Omega$ and $\P$ is a probability measure. We further consider a second measurable space ($E, \mathcal{B}(E))$, where $E$
is a Banach space
and $\mathcal{B}(E)$ is the corresponding Borel $\sigma$-algebra. An $E$-valued random field  is any mapping $X:\Omega \to E$
such that
$\{\omega \in \Omega: X(\omega) \in B\}\in \F$ holds for any $B\in \mathcal{B}(E)$.
For $ r \in [1,\infty) \cup \{\infty\}$ we consider the Bochner space $\leb{r}{\Omega}{E}$ of $r$-summable $E$-valued random
variables $X$ equipped with the norm
\begin{align*}
  \|X\|_{\leb{r}{\Omega}{E}} :=
  \begin{cases}
    (\int_\Omega \|X(\omega)\|_E^r ~\dPOmega)^{1/r} = \E{\|X\|_E^r}^{1/r},\quad& 1\leq r<\infty \\
    \operatorname{ess~sup}\limits_{\omega \in \Omega} \|X(\omega)\|_E,	&r= \infty .
  \end{cases}
\end{align*} 
The uncertainty is introduced via a random vector
$\xi(\omega) = \big(\xi_1(\omega),\ldots, \xi_N(\omega)\big)  : \Omega\to  \Xi \subset \R^N$ with independent, absolutely continuous random variables as components.
This means that for each random variable $\xi_i$ there exists a density function
$f_{\xi_i}:\R \to \R_+$, such that $\int_{\R} f_{\xi_i}(y)~ \mathrm{d}y=1$ and	$\P[ \xi_i \in A]=\int_A
f_{\xi_i}(y)~ \mathrm{d}y$
for any $A \in \mathcal{B}(\R)$ and for all $i=1,\ldots, N$.
Moreover, the joint density function $f_{\xi}$ of the random vector
 $\xi = (\xi_1,\ldots,\xi_N)$ can be written as
$ f_{\xi}(y) = \prod \limits_{i=1}^N f_{\xi_i}(y_i)\quad \text{for all } ~ y= (y_1,\ldots, y_N)^\top \in \Xi.$
The random vector induces a probability measure $\tilde{\P}(B):= \P( \xi^{-1}(B))$ for all $B \in \mathcal{B}(\Xi)$ on the
measurable space $(\Xi, \mathcal{B}(\Xi))$. This measure is called the law of $\xi$ and in the following we work on the
image probability space $(\Xi, \mathcal{B}(\Xi), \tilde{\P})$.

\subsection{The Random Navier-Stokes Equations}
As the physical space we consider a bounded domain $D\subset \R^2$ and we further define the
space-time-stochastic domain $D_{T,\Xi}:= (0,T) \times D\times \Xi$.
We focus on the random compressible Navier-Stokes equations given by
\begin{equation} \label{eq:NSE}
  U_t+{\nabla}_x \cdot ({G}(U)-{H}(U,{\nabla}_xU))=0, \qquad  \forall ~ (t,{x},y) \in D_{T,\Xi},
\end{equation}
where $U(t,{x},y)$ denotes the solution vector of the conserved quantities, i.e. we have $U=(\rho,\rho v_1, \rho v_2,\rho e)^\top$.
  ${G}$ and ${H}$ denote the advective  and viscous fluxes, 
i.e.
\begin{equation}
  {G}_{i}(U)=\begin{pmatrix}\rho\, v_i\\ \rho\, v_1 v_i +\delta_{1i}\,p\\
  \rho\, v_2 v_i +\delta_{2i}\,p\\ \rho\,ev_i + p\,v_i\end{pmatrix},\; \quad
  {H}_{i}(U,{\nabla}_{x}U)=\begin{pmatrix}0\\ \tau_{1i}\\
  \tau_{2i} \\ \tau_{ij}v_j - q_i\end{pmatrix}, \quad i=1,2.
\end{equation}
Here, $\delta_{ij}$ is the Kronecker delta function and the physical quantities $\rho$, ${v}=(v_1,v_2)^\top$, $p$, and $e$
represent density, the velocity vector, the pressure and the specific total energy, respectively. With Stokes' and Fourier's hypothesis,
the viscous stress tensor ${\tau}$ and the heat flux ${q}$ reduce to
\begin{equation} \label{def:tau}
  {\tau}=\mu ({\nabla} {v} + ({\nabla}{v})^\top - \frac{2}{3}({\nabla}\cdot {v}){I}), \quad
  {q}=-k{\nabla}\mathcal{T}.
\end{equation}
In \eqref{def:tau} $\mu$ is the dynamic viscosity, 
$k$ the thermal conductivity and $\mathcal{T}$ the local temperature. In order to
solve for the unknowns, the system has to be closed by appropriate equations of state. 
We choose for the gas constant $R$, the adiabatic exponent $\kappa$ and the specific 
heat at constant volume $c_v$ the perfect gas law assumptions
\begin{equation}
  p=\rho R \mathcal{T}=(\kappa-1)\rho(e-\frac 1 2 {v}\cdot{v}),\qquad e=\frac{1}{2}{v}\cdot{v} + c_v \mathcal{T}.
\end{equation}
We augment \eqref{eq:NSE} with suitable boundary and initial conditions, denoted by
\begin{align*}
B(U) = g  \quad \forall~ (x,y) \in \partial D \times \Xi, \quad U(0,x,y)=U^0(x,y)  \quad \forall ~(x,y) \in D \times \Xi. 
\end{align*}
The boundary operator $B$, the boundary data $g$ and the initial condition $U^0$ 
will be specified when we detail the settings for
the numerical experiments in \secref{sec:numerics}.

Following \cite{SchwabMishraMLMC} we call $U\in L^2(\Xi;C^1([0,T];L^2(D)))$ a weak random solution of \eqref{eq:NSE},
if it is a weak solution
 $\tilde{\P}$-a.s. $y \in \Xi$
and a measurable mapping $\Big(\Xi, \mathcal{B}(\Xi)\Big) \ni y \to U(\cdot,\cdot,y) 
\in  \Big(C^1([0,T];L^2(D)),
\mathcal{B}\big(C^1([0,T];L^2(D))\big)\Big)$.
%\fmnote{genauer?}
%%%%%%%%%%%%%%%%%%%%%%%%%%%%%%%%%%%%%%%%%%%%%%%%%%%%%%%%%%%%%%%%%%%%%%%%%%%%%%%%%%%%%%%%%%%%%%%%%%%%%%%%%%%%%%%%%%%%%%%%%%%%%%%%%%%%%%%%%%%%%%%%%%%%%%%%%%%%%%
%SUBSECTION
%
%%%%%%%%%%%%%%%%%%%%%%%%%%%%%%%%%%%%%%%%%%%%%%%%%%%%%%%%%%%%%%%%%%%%%%%%%%%%%%%%%%%%%%%%%%%%%%%%%%%%%%%%%%%%%%%%%%%%%%%%%%%%%%%%%%%%%%%%%%%%%%%%%%%%%%%%%%%%%%
\subsection{The Runge--Kutta Discontinuous Galerkin Method}\label{sec:DG}
We shortly recall the Discontinuous Galerkin (DG) spatial discretization for the
initial-boundary value problem \eqref{eq:NSE}, see \cite{HindenlangGassner2012}
for more details. To partition the 
spatial domain we subdivide $D$ into $\nSpace\in \N$ quadrilateral elements $D_m$, $m=1,\ldots, \nSpace$ with $D= \bigcup \limits_{m=1}^{\nSpace} D_m$
and define the mesh size  $h:= \max \limits_{m=1,\ldots,\nSpace} h_m$, where $h_m$ is the length of~$D_m$. Moreover, we define $h_{min}:= \min \limits_{m=1,\ldots,\nSpace} h_m$.
Furthermore, let us introduce the space of piecewise DG polynomial ansatz and test functions:
$
  \mathcal{V}_{h}^q := \{ U : D \to \R^{4}~|~ U\big|_{D_m} \in \P_q(D_m; \R^{4}),~\text{for } 1\leq m \leq {\nSpace}\},
$
where $\P_q(D_m; \R^{4})$ is the space of polynomials of degree $q$ on the element $D_m$ 
and $U\big|_{D_m}$
denotes $U$ restricted to $D_m$. The DG solution $U_h$ is then sought in  $\mathcal{V}_{h}^q$, i.e.
$U_h(t,\cdot,y) \in  \mathcal{V}_{h}^q$, a.e. $(t,y) \in (0,T)\times \Xi$.
On each element $D_m$, $m=1,\ldots, \nSpace$ we use tensor products of local one-dimensional Lagrange interpolation
polynomials of degree $q$, i.e.
\begin{align}
  U_h(t,x, y) \big|_{D_m} = \sum \limits_{i,j=0}^q U_{i,j}^m(t,y) l_i^m (x_1) l_j^m(x_2).
\end{align}
The interpolation nodes are chosen to be the Gau{\ss}-Legendre nodes,
cf. \cite{HindenlangGassner2012}.
We then consider the (spatial) weak form of \eqref{eq:NSE} given by  
\begin{align}\label{eq:dg}
  \frac{\partial}{\partial t}\int_{D}  & U(t,\cdot,y)  \Phi\, ~\mathrm{d}{x}   +\oint_{\partial D}
  (\mathcal{G}(U(t,\cdot,y))-\mathcal{H}(U(t,\cdot,y),{\nabla}_xU(t,\cdot,y)))\Phi\,~\mathrm{d}s  \notag \\
   & - \int_{D} ({G}(U(t,\cdot,y))-{H}(U(t,\cdot,y),{\nabla}_x U(t,\cdot,y)))\cdot{\nabla}_{x}\Phi\,~ \mathrm{d}{x} =0
\end{align}
for a.e. $t \in (0,T)$ and $\tilde{\P}$-a.s. $y \in \Xi$ and for all test functions $\Phi$.
Now, using the same discrete space $\mathcal{V}_{h}^q$ for ansatz and test functions
in \eqref{eq:dg},
we obtain  the    semi-discrete  DG scheme for $U_h \in L^2(\Xi; C^1([0,T);\mathcal{V}_{h}^q ))$:
\begin{align}\label{eq:semiDiscrete}
  &  \frac{\partial}{\partial t}\int_{D}  U_h(t,\cdot,y)  \Phi_h\,~ \mathrm{d}{x}   +\oint_{\partial D}
  \mathcal{G}_{n}^*(U_h^-(t,\cdot,y),U_h^+(t,\cdot,y))\Phi_h\,~\mathrm{d}s  \notag \\ &  +\oint_{\partial
  D} \mathcal{H}^*_n(U_h(t,\cdot,y),{\nabla}_xU_h(t,\cdot,y))\Phi_h\,~\mathrm{d}s - \int_{D}
  {G}(U_h(t,\cdot,y))\cdot{\nabla}_{x}\Phi_h\,~ \mathrm{d}{x} \notag \\
   &+ \int_{D}	{H}(U_h(t,\cdot,y),{\nabla}_x U_h(t,\cdot,y))\cdot{\nabla}_{x}\Phi_h\,~ \mathrm{d}{x} =0
\end{align}
for all $\Phi_h \in \mathcal{V}_{h}^q $.
Here, $\mathcal{G}_{n}^*(U^-,U^+)$ denotes a numerical flux,
which depends on values at the grid cell interface from neighboring 
cells.
In this paper, we have chosen the approximate Roe Riemann solver with entropy fix described in 
\cite{Harten1983}.
The viscous fluxes $\mathcal{H}^*_n$ normal to the cell interfaces are approximated by the procedure described by Bassi and
Rebay in \cite{BassiRebay1997}. 
The DG scheme \eqref{eq:semiDiscrete} is then advanced in time by a $(q+1)$-th order Runge--Kutta method
\cite{KennedyCarpenterRK} constrained by a CFL type condition of the form 
\begin{align} \label{def:cfl}
\Delta t \leq \min \Big\{ \frac{h_{min}}{\lambda_{max}^c (2q+1)}, 
\Big(\frac{h_{min}}{\lambda_{max}^v (2q+1)}\Big)^2 \Big\}.
\end{align}
In \eqref{def:cfl} $\lambda_{max}^c:=\big((|v_1|+c)+(|v_2|+c) \big)$ is 
an estimate for the absolute value of the largest eigenvalue of the
convective flux Jacobian 
with $c:=\sqrt{\kappa \frac{p}{\rho}}$ being the speed of sound.
Moreover, 
$\lambda_{max}^v:=\Big(\max\Big(\frac{4}{3 \rho},\frac{\kappa}{p}\Big)
\frac{\mu}{Pr}\Big)$ is an estimate
for the largest eigenvalue of the diffusion matrix of the viscous flux,
$Pr= \frac{c_p \mu}{k}$ being the Prandtl number.
With this choice the consistency error of the numerical scheme is 
formally of order $\mathcal{O}(h^{q+1} +\Delta t^{q+1})$.
We note that we indicate the numerical solution  in the remaining part of this paper
by the spatial parameter $h$ only.

%%%%%%%%%%%%%%%%%%%%%%%%%%%%%%%%%%%%%%%%%%%%%%%%%%%%%%%%%%%%%%%%%%%%%%%%%%%%%%%%%%%%%%%%%%%%%%%%%%%%%%%%%%%%%%%%%%%%%%%%%%%%%%%%%%%%%%%%%%%%%%%%%%%%%%%%%%%%%%
%SECTION
%
%%%%%%%%%%%%%%%%%%%%%%%%%%%%%%%%%%%%%%%%%%%%%%%%%%%%%%%%%%%%%%%%%%%%%%%%%%%%%%%%%%%%%%%%%%%%%%%%%%%%%%%%%%%%%%%%%%%%%%%%%%%%%%%%%%%%%%%%%%%%%%%%%%%%%%%%%%%%%%
\section{The $hp$-Multilevel Monte Carlo Method} \label{sec:MLMC}

\subsection{Description of the $hp$-MLMC Method}
In this section we introduce the $hp$-MLMC method, based on the classical MLMC method from
\cite{Giles2008}. 
For  levels $l=0,\ldots,L$, we consider spatial meshes with $N_l\in
\N$ elements and ansatz spaces of polynomial
degree $q_l\in \N$. We choose the number of elements $N_l$ and the DG polynomial
degrees $q_l$, such that $N_0<\cdots<N_L$ and $q_0<\cdots<q_L$ holds, i.e.,
we simultaneously increase the mesh size and the DG polynomial degree.
With $\mathcal{V}_{h_l}^{q_l}$ we denote the DG polynomial
space corresponding to level $l=0,\ldots,L$ .
Moreover, by  
$U_l(t,\cdot,y):= U_{h_l}(t,\cdot,y) \in \mathcal{V}_{h_l}^{q_l} $ (a.e.~$(t,y)\in (0,T)\times
\Xi)$ we denote the DG numerical solution
associated with level $l \in \{0,\ldots,L\}$.
Additionally, the deterministic numerical solution of \eqref{eq:semiDiscrete} 
on level $l=0,\ldots,L$ for input parameter $y_i \in \Xi$
is denoted by $U_l^i:=U_l(\cdot,\cdot,y_i)$ and will be called sample
for the remaining part of this paper.
Since we do not enforce a global CFL time-step restriction 
across different discretization levels, each sample has to obey the CFL condition
\eqref{def:cfl}. Thus, coarser levels admit a bigger time-step than fine levels.
We want to emphasize that, in contrast to the MIMC method from \cite{TemponeNobile2016}
we index mesh-size and polynomial degree by a single parameter.

It is our goal to compute statistical moments like expected value or higher order moments 
of a general Quantity of Interest
(QoI) $Q(U)$ of the random weak solution  $U$ of \eqref{eq:NSE}. Precisely, we are interested to determine
\begin{align}\label{eq:exactExp}
  \E{Q(U(t,x,y))}  = \int_{\Xi} Q(U(t,x,y)) f_{\xi}(y) \d{y}
\end{align}
for a.e. $(t,x) \in (0,T)\times D$. Here $Q$ can be an arbitrary nonlinear 
function or functional of $U$.
To ease notation we suppress the dependence of the QoI on $(t,x,y)$ and write $Q(U)$.
We approximate \eqref{eq:exactExp} with a Monte Carlo estimator.
To this end, we let   $\{U_L^i\}_{i=1}^M$
be $M \in \N$ independent, identically distributed  samples.
The MC estimator for \eqref{eq:exactExp} is then defined by 
\begin{align} \label{eq:numExp}
  \EMC{M}{Q(U)} :=   \frac{1}{M} \sum \limits_{i=1}^{M} Q(U_L^i) \approx \E{Q(U)}.
\end{align}
Next we advance the MC estimator $\EMC{M}{\cdot}$ to the $hp$-MLMC estimator $\EMLMC{L}{\cdot}$ by using the linearity of
the expectation in combination with a telescoping sum.
 We then write (see \cite{Giles2008})
\begin{align}\label{eq:telescopeSum}
  \E{Q(U_L)} = \sum \limits_{l=0}^L \E{Q(U_l)-Q(U_{l-1})},
\end{align}
where we used the definition $Q(U_{-1})=0$. Now, each term in \eqref{eq:telescopeSum} can be estimated by the MC estimator \eqref{eq:numExp}. If we let $M_l\in \N$ 
denote a level-dependent number of samples for each level $l=0,\ldots,L$ and assume that the samples
$\{Q(U_l^i)\}_{i=1}^{M_l}$, $l=0,\ldots,L$, on different levels
are independent from each other, we
obtain the $hp$-MLMC estimator via
\begin{align*}%\label{eq:approxExp}
  \EMLMC{L}{Q(U_L)} &:= \sum \limits_{l=0}^L \frac{1}{M_l} \sum \limits_{i=1}^{M_l}   (Q(U_l^i) - Q(U_{l-1}^i)) 
  =   \sum \limits_{l=0}^L \EMC{M_l}{Q(U_l) - Q(U_{l-1})} \\
  & \approx \sum \limits_{l=0}^L \E{Q(U_l)-Q(U_{l-1})} = \E{Q(U_L)}\nonumber.
\end{align*}
Here, $Q(U_l^i)$ and $Q(U_{l-1}^i)$, are computed using the same sample
$y_i^l \in \Xi$.
The main idea of the MLMC estimator is
that the global behavior of the exact expectation can be approximated by the behavior of the expectation of numerical solutions
with low resolution,
where each sample can be computed with low cost. Thus,
 $M_l$ is supposed to be large for coarse levels.
The coarse-level expectation 
is then successively corrected by a few computations on finer levels.
Each fine-level sample is
computationally expensive and therefore,
$M_l$ is supposed to be small on fine levels.
Hence, the most important aspect for the efficiency of the $hp$-MLMC  estimator is
the correct choice  of $M_l$. In the following section we want to derive the best choice for $M_l$ such that
the total work is minimized under the constraint that the spatial and stochastic error satisfy
a certain threshold.
\subsection{Optimal Number of Samples}
For the following analysis we set the QoI to be the solution
itself at a fixed point $t\in [0,T]$ in time,
i.e. 
\begin{align} \label{eq:qoi}
Q(U)=U(t,\cdot,\cdot).
\end{align}
We note that our analysis can be analogously performed
using any other QoI, including functional ones and suitable norms for $Q$.
\begin{remark}
We consider the QoI \eqref{eq:qoi} because we are
mainly interested in statistical quantities of the solution $U$ itself.
However, many other QoIs have a higher regularity than the solution $U$, 
especially if $Q$ is a functional. Therefore, using  such a QoI %with high regularity
yields a faster decrease of the variance across different levels, which
increases the performance of the MLMC method compared to MC. 
In \cite[Fig. 10]{abgrallmishra2017}
it has been numerically shown that for an uncertain Kelvin-Helmholtz problem 
the level variance of the solution does not decrease, because upon each mesh 
refinement additional smaller scale structures are detected. 
Thus, the MLMC method provides
no computational gains compared to the MC method when the QoI is  
the solution  itself. 
\end{remark}
With the help of the following representation of the 
root mean square error (RMSE)
we derive an optimal number of samples $M_l$ for all $l=0, \ldots,L$.
\begin{align}
 \text{RMSE}&:= \| \E{U(t,\cdot,\cdot)}- \EMLMC{L}{U_L(t,\cdot,\cdot)}\|_{L^2(\Xi;\Lp{2}{D})}   \leq \epsilon_{\mathrm{det}} +
  \epsilon_{\mathrm{stat}}. \label{eq:splittingMSE} \\
   \epsilon_{\mathrm{det}}&:=\|\E{U(t,\cdot,\cdot)}- \E{U_L(t,\cdot,\cdot)}\|_{\Lp{2}{D}}, \notag \\
   \epsilon_{\mathrm{stat}}&:=\|\E{U_L(t,\cdot,\cdot)}- \EMLMC{L}{U_L(t,\cdot,\cdot)}\|_{L^2(\Xi;\Lp{2}{D})}. \notag
\end{align}
The  term $\epsilon_{\mathrm{det}}$ in \eqref{eq:splittingMSE}
 is the deterministic approximation error (bias).
It accounts for the insufficient resolution of the deterministic system.
The  term  $\epsilon_{\mathrm{stat}}$ corresponds to the statistical (sampling) error. Its occurs due to the finite number
of samples in \eqref{eq:numExp}.
We choose the optimal number of samples $M_l$ to minimize this term.
For notational convenience we suppress the explicit dependence on $t\in [0,T]$.
Using the independence of the samples, we rewrite the statistical error in \eqref{eq:splittingMSE}
(cf. \cite{MuellerJennyMeyerTwoPhase}):
\begin{align}\label{eq:statisticalError}
  \epsilon_{\mathrm{stat}}^2 & =
  \mathbb{E}\Big [\|\E{U_L}- \EMLMC{L}{U_L}\|_{\Lp{2}{D}}^2 \Big]  & \nonumber
  \\
  &= \mathbb{E}\Big [\Big \|\sum \limits_{l=0}^L \E{U_l-U_{l-1}}
   - \EMC{M_l}{U_l-U_{l-1}}\Big \|_{\Lp{2}{D}}^2\Big]& \nonumber
  \\
  & = \sum \limits_{l=0}^L \frac{1}{M_l^2} \sum \limits_{i=1}^{M_l}  \mathbb{E}\Big [\| \E{U_l -U_{l-1}}-(U_l^i - U_{l-1}^i)\|_{\Lp{2}{D}}^2
  \Big] & \nonumber
  \\
  & = \sum \limits_{l=0}^L \frac{1}{M_l} \mathbb{E}\Big [\| \E{U_l -U_{l-1}}-(U_l- U_{l-1})\|_{\Lp{2}{D}}^2 \Big]   & \nonumber
  \\
  & =:	\sum \limits_{l=0}^L \frac{\sigma_l^2}{M_l}. &
\end{align}
\begin{remark}
We use the $\Lp{2}{D}$-norm instead of the $\Lp{1}{D}$-norm for two reasons.
First, the $\Lp{1}{D}$-norm is appropriate  for
inviscid  flow problems in one spatial dimension, 
but we are interested in regular solutions of  \eqref{eq:NSE}.
Second, using the $\Lp{2}{D}$-norm has the side effect
that \eqref{eq:statisticalError} is satisfied as equality (cf. \cite[Sec. 5.2]{MuellerJennyMeyerTwoPhase}). 
\end{remark} 

From this representation, the optimal number of samples can be obtained by an error-complexity analysis as in \cite{Giles2008,
AppeloeMOMC, MuellerJennyMeyerTwoPhase}.
 We introduce the total work
\begin{align}\label{def:computationalTime}
  \Wtot:=\Wtot(M_0,\ldots,M_L) := \sum \limits_{l=0}^L M_l w_l ,
\end{align}
where $w_l$ is the work needed to create one sample $U_l^i-U_{l-1}^i$. Following \cite{Giles2008}
we obtain the optimal number of samples on different levels by considering the following minimization problem.
\begin{align}\label{def:mininmization}
 \text{For a tolerance } \epsilon>0, \underset{M_0,\ldots,M_L \in \N}{\text{minimize }} \Wtot \text{ under the constraint } \sum \limits_{l=0}^L \frac{\sigma_l^2}{M_l}
  \leq \frac{1}{4}\epsilon^2.
\end{align}

The minimization problem can be explicitly solved by (cf. \cite{Giles2008,
AppeloeMOMC})
\begin{align}
  M_l = \bigg\lceil \Big(\frac{\epsilon}{2}\Big)^{-2} \sqrt{\frac{\sigma_l^2}{w_l}}\sum \limits_{k=0}^L \sqrt{\sigma_k^2 w_k}
  \bigg \rceil.
  \label{eq:Ml}
\end{align}
In this work we consider an iterative version of the 
$hp$-MLMC method, which means that we initialize 
the algorithm with a number of warm-up samples 
and then estimate each quantity in \eqref{eq:Ml} 
 during each iteration
of the $hp$-MLMC method. In order to keep track of 
the number of samples 
that have already been calculated during the iterations of the algorithm
 we denote this quantity by  $M_{\mathrm{tot}_l}$ for
 each level $l\in \{0,\ldots,L\}$.
We want to emphasize, that $M_{\mathrm{tot}_l}$ is not equal to $M_l$,
which is the estimated optimal number of samples from \eqref{eq:Ml}.

The level variances $\sigma_0^2,\ldots\sigma_L^2$ in \eqref{eq:statisticalError} 
are not known in general and we therefore estimate the level variance
using the unbiased estimator as in \cite{MuellerJennyMeyerTwoPhase}:
\begin{align}\label{def:sigmaEstimator}
  \hat{\sigma}_l^2 := \frac{1}{M_{\mathrm{tot}_l}-1} \sum \limits_{j=1}^{M_{\mathrm{tot}_l}} \int_D \bigg( \Big( \frac{1}{M_{\mathrm{tot}_l}} \sum \limits_{i=1}^{M_{\mathrm{tot}_l}}
  (U_l^i - U_{l-1}^i)\Big) - (U_l^j - U_{l-1}^j)\bigg)^2~d{x}.
\end{align}

The work required for the simulation of one sample can vary with an uncertain parameter (e.g. when uncertain 
viscosity influences the time-step restriction). Moreover, on high performance computing systems, random variations in 
work can occur between two executions of the same simulation.
In order to account for this uncertainty, we estimate the work $w_l$ on level $l=0,\ldots,L$ 
by the average work per sample 
$U_l^i$, denoted by $w_l^i$, and define the average work by
\begin{align} \label{def:comptimeEstimator}
  \hat{w_l} := \frac{1}{M_{\mathrm{tot}_l}}  \sum \limits_{i=1}^{M_{\mathrm{tot}_l} } w_l^i.
\end{align}
As a matter of fact, $M_l$ from \eqref{eq:Ml} is also only estimated and
we denote the estimator for $M_l$ by $\hat{M_l}$. 
We have now all ingredients together to state in
\algoref{algo:hpMLMC} the classical MLMC algorithm proposed by Giles \cite{Giles2008}.
Based on this algorithm we want to discuss several important aspects of the $hp$-MLMC method.
First, the complexity of the algorithm will be analyzed in \thmref{thm:complexity}.
The choice of the maximum level $L$ will be considered in \rmref{rem:noLevels}. 
The discussion of the number of warm-up samples $K_0,\ldots, K_L$ (line two in the algorithm), resp. the 
additional samples (lines six and seven in the algorithm) will be postponed to \secref{sec:confIntervals}, where
we derive lower confidence bounds for the optimal number of samples ${M_l}$.
\renewcommand{\thealgorithm}{$hp$-\normalfont{MLMC}}
\begin{algorithm}
  \caption{}
  \begin{algorithmic}[1]
     {\small
    \STATE Fix a tolerance $\epsilon>0$, the maximum level $L\in\N $ and set $\cL :=\{ 0,\ldots,L\}$
    \STATE Compute $K_l$ (warm-up) samples on level $l=0,\ldots,L$ and set $M_{\mathrm{tot}_l}:=K_l$
    \WHILE{$\cL \neq \emptyset$}
    \FOR{$l\in \cL$}
    \STATE  Estimate $w_l$ by \eqref{def:comptimeEstimator}, $\sigma_l^2$ by \eqref{def:sigmaEstimator} and then ${M_l}$
    by \eqref{eq:Ml}
    \IF{$M_l>M_{\mathrm{tot}_l}$}
    \STATE{Add $(M_l-M_{\mathrm{tot}_l})$ new samples of $U_l^i- U_{l-1}^i$ and update $M_{\mathrm{tot}_l}$}
    \ELSE
    \STATE {Set $\cL:= \cL \setminus \{l\}$}
    \ENDIF
    \ENDFOR
    \ENDWHILE
    \STATE Compute $\EMLMC{L}{U_L}$ }
  \end{algorithmic}
  \label{algo:hpMLMC}
\end{algorithm}
\subsection{Computational Complexity of the $hp$-MLMC Method}
In what follows we use the notation $\tilde{q}_l=q_l+1$ for $l\in \N$.\\ 
For the analysis of the  computational complexity of \algoref{algo:hpMLMC} in \thmref{thm:complexity} below
we impose the following assumptions.

\begin{enumerate}[label=(A\arabic*),ref=A\arabic*]
  \item Asymptotic work: \quad $\exists ~\gamma_1 , c_1 >0$ (independent of $h_l, {q}_l$): 
  $w_l\leq  c_1 (h_l^{-1}\tilde{q}_l)^{\gamma_1}$ 
  for all $l\in \N$.\label{ass:BoundedCost}
  \item Bias reduction:\quad $ \exists ~ \kappa_1, c_2 >0$ (independent of $h_l, {q}_l$):  $\|\E{U}-\E{U_l}\|_{L^2(D)} \leq c_2 h_l^{\kappa_1  \tilde{q}_l}$
   for all $l\in \N$. \label{ass:Biasreduction}
  \item Variance reduction between two levels: \quad
    $\exists ~c_3>0$, (independent of $h_l, {q}_l$):
    $ \sigma_l^2  \leq c_3 h_l^{\kappa_2 \tilde{q}_l}$ for some $\kappa_2>0$
    with $\kappa_1 \tilde{q}_0 \geq {\kappa_2 \tilde{q}_0}/{2}$
    and for all $l\in \N$. \label{ass:Varreduction}
\end{enumerate}
\begin{remark} \label{rem:assumptions}
\begin{enumerate}
  \item  \assref{ass:BoundedCost}   provides a bound  for the  computational work in 
terms of the complete  number of degrees of freedom  on  single levels. % $l=0,\ldots,L$.
For the Runge--Kutta Discontinuous Galerkin method in $d=2$ spatial dimensions we have
$h_l^{-2}\tilde q_l^{2}$ spatial degrees of freedom.  This has to be multiplied with  the number of 
time-steps, which is proportional to $h_l^{-1}\tilde q_l$, or 
$h_l^{-2}\tilde q_l^{2}$, depending on
the minimum in the CFL condition \eqref{def:cfl}. Thus, 
the total  computational work asymptotically equals $\mathcal{O}(h_l^{-3}\tilde q_l^{3})$, 
resp. $\mathcal{O}(h_l^{-4} \tilde q_l^{4})$.
Therefore, we
expect the parameter $\gamma_1$ to satisfy $\gamma_1=3$,
or $\gamma_1=4$.
  \item In \assref{ass:Biasreduction} it is stated  that the bias, i.e., the deterministic approximation error,
converges with the order of the DG method. For smooth solutions the order of convergence is
$\mathcal{O}(h_l^{q_l+1})=\mathcal{O}(h_l^{\tilde{q}_l}),$ cf. \cite{ZhangShu2004} and the numerical experiments in \cite{HindenlangGassner2012}.
Therefore, we expect $\kappa_1=1$.
  \item In \assref{ass:Varreduction} we require that the variance decays on all levels  %  $l=0,\ldots,L$
similar to the bias term. If we consider regular solutions of 
a random differential equation, we expect
$\kappa_2=2$, cf. the discussion in \cite[p. 25]{Nobile2017}.  
\end{enumerate}
\end{remark}
In \thmref{thm:complexity} below we present a  complexity bound  for the $hp$-MLMC method. 
	This result generalizes \cite[Theorem 1]{CliffeScheichl2011} on $h$-refined and \cite[Theorem 3]{AppeloeMOMC} on $p$-refined meshes to
	$hp$-refined mesh hierarchies.\\	We distinguish  between  the two different cases
$\kappa_2 \tilde{q}_0 > \gamma_1$ and $\kappa_2 \tilde{q}_0 < \gamma_1$. For the second case
$\kappa_2 \tilde{q}_0 <\gamma_1$
we need to define  a critical level $L^*\in \N$ such that $\kappa_2 \tilde{q}_{L^*} <\gamma_1 \leq \kappa_2 \tilde{q}_{L^*+1}$.
%With this notation  and 
%the assumptions from above we can prove 

\begin{theorem}[Complexity of the $hp$-MLMC method]\label{thm:complexity}
  For $\beta \in \N$, and $q_0\geq 0$ let
  $ \{q_l:= q_0 + \beta l \}_{l\in \N}$ be a sequence of DG polynomial degrees. 
  Additionally, we consider a family of  meshes with associated
  mesh size $h_l=\lambda^{-l}h_0$
  for some $h_0 \in (0,1)$ and $\lambda \geq 2$.
  Let $\mathcal{V}_{h_l}^{q_l}$ be the corresponding DG spaces.\\
  Under the  assumptions \eqref{ass:BoundedCost} - \eqref{ass:Varreduction}, 
  there exists a constant $c>0$, such that for any tolerance $0<\epsilon<\min \big(1,2c_2 h_0^{\kappa_1 \tilde{q}_0}\big)$,
  there exists a maximum level $L=L(\epsilon)\in \N$, $L(\epsilon) \geq 2$, and a number of samples $M_l$ on each level 
  $l\in \{0,\ldots,L(\epsilon)\}$,
  such that the root mean square error from \eqref{eq:splittingMSE} satisfies $\textsc{RMSE} \leq \epsilon$
  with the computational complexity 
  \begin{align} \label{complexityestimate}
    \Wtot\leq   
    \begin{cases}
       c \epsilon^{-2},	\qquad &  \kappa_2 \tilde{q}_0 > \gamma_1, \\
      c  \epsilon^{\displaystyle  -2-\frac{\gamma_1 - \kappa_2 \tilde{q}_0}{\min\{\kappa_1 \tilde{q}_L,
     \kappa_1 \tilde{q}_{L^*}\}} } , 
      &  \kappa_2 \tilde{q}_{0} < \gamma_1.
    \end{cases} 
  \end{align}
\end{theorem}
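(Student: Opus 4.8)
The plan is to follow the classical MLMC complexity argument of Cliffe--Scheichl \cite{CliffeScheichl2011} and its $p$-refinement analogue in \cite{AppeloeMOMC}, adapting every geometric-series estimate to the combined scaling $h_l = \lambda^{-l}h_0$, $\tilde q_l = \tilde q_0 + \beta l$. First I would use \assref{ass:Biasreduction} to fix the top level: since $\|\E{U}-\E{U_L}\|_{\Lp{2}{D}} \le c_2 h_L^{\kappa_1\tilde q_L} = c_2 h_0^{\kappa_1\tilde q_L}\lambda^{-\kappa_1\tilde q_L L}$, choosing $L=L(\epsilon)$ as the smallest integer with $c_2 h_0^{\kappa_1\tilde q_L}\lambda^{-\kappa_1 L\tilde q_L} \le \epsilon/2$ guarantees $\epsilon_{\mathrm{det}}\le\epsilon/2$, and the stated bound on $\epsilon$ forces $L\ge 2$. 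The key quantitative point is to extract from this implicit definition the asymptotics $\lambda^{-\kappa_1 L\tilde q_L}\sim \epsilon$ up to constants, equivalently $h_L^{\kappa_1\tilde q_L}\asymp \epsilon$; because $\tilde q_L$ grows linearly in $L$, the exponent $\kappa_1 L\tilde q_L$ is quadratic in $L$, so $L(\epsilon) = \mathcal{O}(\sqrt{\log(1/\epsilon)})$ — this is exactly why the $\log$ factor present in the pure $h$-refinement case disappears here.

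Next I would insert the optimal sample numbers \eqref{eq:Ml} into the work functional \eqref{def:computationalTime}. The standard Lagrange-multiplier computation gives $\Wtot \le (\epsilon/2)^{-2}\big(\sum_{l=0}^L \sqrt{\sigma_l^2 w_l}\big)^2 + \sum_{l=0}^L w_l$ (the second term from the ceilings), and by \assref{ass:BoundedCost} and \assref{ass:Varreduction} each summand satisfies $\sqrt{\sigma_l^2 w_l} \le \sqrt{c_1 c_3}\,(h_l^{-1}\tilde q_l)^{\gamma_1/2} h_l^{\kappa_2\tilde q_l/2} = \sqrt{c_1c_3}\,\tilde q_l^{\gamma_1/2}\, h_l^{(\kappa_2\tilde q_l-\gamma_1)/2}$. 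Writing $h_l = \lambda^{-l}h_0$, the exponent of $\lambda^{-l}$ in the $l$-th term is $(\kappa_2\tilde q_l-\gamma_1)/2 = (\kappa_2\tilde q_0-\gamma_1)/2 + \kappa_2\beta l/2$. In the first case $\kappa_2\tilde q_0 > \gamma_1$ every term decays geometrically (and faster, since the $\lambda^{-l}$ exponent increases with $l$), so $\sum_l\sqrt{\sigma_l^2 w_l}$ is bounded by a convergent series independent of $L$ — here one must absorb the polynomially growing factor $\tilde q_l^{\gamma_1/2}$ into the super-geometric decay, which is routine. Likewise $\sum_l w_l \le c_1\sum_l(\lambda^l h_0^{-1}\tilde q_l)^{\gamma_1}\lesssim (\lambda^L h_0^{-1}\tilde q_L)^{\gamma_1} = h_L^{-\gamma_1}\tilde q_L^{\gamma_1}$, and since $h_L^{\kappa_2\tilde q_0}\lesssim h_L^{\kappa_1\tilde q_L\cdot(\kappa_2\tilde q_0/\kappa_1\tilde q_L)}$... more cleanly, $h_L^{-\gamma_1} = (h_L^{\kappa_1\tilde q_L})^{-\gamma_1/(\kappa_1\tilde q_L)}\lesssim \epsilon^{-\gamma_1/(\kappa_1\tilde q_L)}$, whose exponent $\to 0$, so this term is $o(\epsilon^{-2})$. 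Combining gives $\Wtot\le c\,\epsilon^{-2}$.

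For the second case $\kappa_2\tilde q_0 < \gamma_1$, the early terms of $\sum_l\sqrt{\sigma_l^2 w_l}$ grow: the $\lambda^{-l}$-exponent $(\kappa_2\tilde q_l-\gamma_1)/2$ is negative precisely for $l\le L^*$ (by the definition of the critical level $\kappa_2\tilde q_{L^*}<\gamma_1\le\kappa_2\tilde q_{L^*+1}$) and nonnegative afterwards. Hence the sum is dominated by its value near $l=L^*$, giving $\sum_l\sqrt{\sigma_l^2 w_l}\lesssim \tilde q_{L^*}^{\gamma_1/2} h_{L^*}^{(\kappa_2\tilde q_{L^*}-\gamma_1)/2} = \tilde q_{L^*}^{\gamma_1/2}\, h_0^{(\kappa_2\tilde q_{L^*}-\gamma_1)/2}\lambda^{L^*(\gamma_1-\kappa_2\tilde q_{L^*})/2}$, which after squaring and translating $L^*$ back into $\epsilon$ via $h_{L^*}\asymp h_L^{L^*\tilde q_{L^*}/(L\tilde q_L)}$ and $h_L^{\kappa_1\tilde q_L}\asymp\epsilon$ yields the exponent $-2-\frac{\gamma_1-\kappa_2\tilde q_0}{\min\{\kappa_1\tilde q_L,\kappa_1\tilde q_{L^*}\}}$ in the statement; the $\min$ appears because one bounds $h_{L^*}^{\gamma_1-\kappa_2\tilde q_{L^*}}$ by a power of $\epsilon$ using whichever of $\kappa_1\tilde q_L$ or $\kappa_1\tilde q_{L^*}$ gives the correct (worst) exponent, and the leftover work term $\sum_l w_l$ is again absorbed.

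The main obstacle I anticipate is bookkeeping the two competing linear-in-$l$ growths inside the exponents cleanly: because both the cost exponent $\gamma_1$ acts on $h_l^{-1}\tilde q_l$ and the variance/bias exponents $\kappa_1,\kappa_2$ are multiplied by the growing $\tilde q_l$, the series in $\Wtot$ are not plain geometric series but geometric-times-polynomial with an $l$-dependent ratio, and one must be careful that the "dominant term" heuristic (convergent tail in case one, peak at $L^*$ in case two) is made rigorous with explicit constants, and that the final substitution converting $L$, $L^*$, $h_L$, $h_{L^*}$ into powers of $\epsilon$ produces exactly the stated exponent rather than something merely proportional to it. Everything else is a direct transcription of the Cliffe--Scheichl / Appelö–et al. template.
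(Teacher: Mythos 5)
Your overall strategy is the standard Cliffe--Scheichl/Appel\"o template that the paper also follows: fix $L$ from the bias, bound $\Wtot$ by $(\epsilon/2)^{-2}\big(\sum_l\sqrt{\sigma_l^2w_l}\big)^2+\sum_lw_l$, and treat the two sign regimes of $\kappa_2\tilde q_l-\gamma_1$ separately. Your choice of $M_l$ via the Lagrange formula \eqref{eq:Ml} differs from the paper, which instead prescribes $M_l$ level by level with explicit convergent weights $S_0$, resp.\ $S_*$ (precisely so that each $M_l$ is independent of $L(\epsilon)$); both routes are legitimate and give the same exponents. The series manipulations in Case A are fine, and your observation that $L(\epsilon)=\mathcal{O}(\sqrt{\log(1/\epsilon)})$ kills the logarithm is correct.

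The genuine gap is the step you yourself flag as ``the key quantitative point'' and then assert rather than prove: $h_L^{\kappa_1\tilde q_L}\asymp\epsilon$ up to constants. Because the exponent $\kappa_1 L\tilde q_L$ is \emph{quadratic} in $L$, rounding the continuous root $\bar L$ up to $L=\lceil\bar L\rceil$ multiplies $h_L^{\kappa_1\tilde q_L}$ by a factor of order $\lambda^{-2\kappa_1\beta L}$, which is \emph{not} bounded below by an $\epsilon$-independent constant; so $h_L^{\kappa_1\tilde q_L}\gtrsim\epsilon$ is false as stated. What actually holds (and what the paper isolates in \lemref{lem:help}) are the mixed bounds $\tfrac{\epsilon}{2}\delta\le c_2 h_L^{\kappa_1\tilde q_0}$ and $\tfrac{\epsilon}{2}\delta\le c_2h_0^{\kappa_1\tilde q_L}$, obtained by showing the offending factors $\lambda^{\kappa_1\beta(2L-L^2)}$, $h_0^{\kappa_1\beta L}$, $\lambda^{-L\kappa_1\tilde q_0}$ are all $\le1$ once $L\ge2$. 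Your subsequent use $h_L^{-\gamma_1}=(h_L^{\kappa_1\tilde q_L})^{-\gamma_1/(\kappa_1\tilde q_L)}\lesssim\epsilon^{-\gamma_1/(\kappa_1\tilde q_L)}$ happens to be salvageable (the bad factor $\lambda^{2\kappa_1\beta L}$ raised to $\gamma_1/(\kappa_1\tilde q_L)$ stays bounded since $\beta L\le\tilde q_L$), but that argument is missing, and without it the Case~A work bound and both Case~B exponents are unsupported. Two further, smaller problems in Case B: the translation $h_{L^*}\asymp h_L^{L^*\tilde q_{L^*}/(L\tilde q_L)}$ is not a correct identity (the clean route is $c_2h_{L^*}^{\kappa_1\tilde q_{L^*}}\ge c_2h_{L-1}^{\kappa_1\tilde q_{L-1}}\ge\epsilon/2$ for $L^*<L$, whence $h_{L^*}^{-(\gamma_1-\kappa_2\tilde q_0)}\le(2c_2\epsilon^{-1})^{(\gamma_1-\kappa_2\tilde q_0)/(\kappa_1\tilde q_{L^*})}$), and the subcase $L\le L^*$ --- which is where the $\kappa_1\tilde q_L$ branch of the $\min$ in \eqref{complexityestimate} comes from --- is not treated. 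Also note that the peak of $l\mapsto\tilde q_l^{\gamma_1/2}h_l^{(\kappa_2\tilde q_l-\gamma_1)/2}$ sits near $L^*/2$, not at $L^*$; this is harmless only because $L^*$ is independent of $\epsilon$, a fact worth stating explicitly.
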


\begin{remark}
\begin{enumerate}
 \item  The complexity bound  for the case $\kappa_2 \tilde{q}_{0} < \gamma_1$ in  \thmref{thm:complexity}  suggests  that  there
 exists a threshold for the asymptotic complexity scaling like  
  $\mathcal{O}(\epsilon^{-2-\frac{\gamma_1 - \kappa_2 \tilde{q}_0}{
 \kappa_1\tilde{q}_{L^*}}})$. With other words, increasing the number of levels  $L$ beyond the critical level $L^*$ does not pay off.
 \item \thmref{thm:complexity} does  not include the borderline case $\kappa_2 \tilde{q}_0 = 
 \gamma_1$.  Following the analysis in  \cite{CliffeScheichl2011}  one can make a saturated choice for all 
 sample numbers $M_l$ scaling with the  maximum level  $L(\epsilon)$.  Then a slight variation of  Case A  in the proof 
 of \thmref{thm:complexity} 
 leads to the  total work estimate
 $W_{\text{tot}} = \mathcal{O}\big(\epsilon^{-2} \log (\epsilon)^2 \big)$. Note that the sample numbers $M_l$ in 
 \thmref{thm:complexity} are chosen for each  single level $l$ independent of $L(\epsilon)$ (see e.g.~\eqref{eq:noSamples}). 
\end{enumerate}
\end{remark}

\begin{proof}[Proof of \thmref{thm:complexity}]
  In the proof 
  we determine   first a natural  number of levels $L=L(\epsilon)\ge 2$ such that the bias term
  $\epsilon_{\mathrm{det}}$ in  \eqref{eq:splittingMSE} is bounded by $\epsilon/2$. 
  Based on this number $L$ we define the  sample number $M_l$ for each level $l \in \{0, \ldots, L\}$, 
  and verify  that  $\epsilon_{\mathrm{stat}}\le \epsilon/2$  and the complexity estimate \eqref{complexityestimate} hold.\\ 
 Using   \assref{ass:Biasreduction} on the bias term and the definition of $h_l$ and $q_l$  yields
  \begin{align}\label{ineq:bias}
   \epsilon_{\mathrm{det}} = {\|\E{U}- \E{U_L}\|}_{L^2(D)} \leq c_2 h_l^{\kappa_1 \tilde{q}_l} = c_2 	(\lambda^{-l}h_0)^{\kappa_1(\tilde{q}_0+ \beta l)}        .% \leq \frac{\epsilon}{2}.
  \end{align}
  To ensure that the latter term is bounded by ${\epsilon}/{2}$ for $l=0,\ldots,L$ it suffices to determine   
   $L$ such  that $P(L) \le 0$ holds with 
  \begin{align} \label{ineq:bias2}
   P(l):= \log(2c_2 \epsilon^{-1}) + \kappa_1(\tilde{q}_0+ \beta l)\big( \log(h_0) - 
    l  \log(\lambda) \big).
  \end{align}
The quadratic polynom $P$ is bounded from above and  monotone decreasing on $ [0,\infty)$.  Since 
   $\epsilon \le 2 c_2 h_0^{\kappa_1 \tilde{q}_0} $ holds  we have $P(0) > 0$. Thus, $P$ vanishes for  some  $\bar L >0 $, and we choose 
  the number of levels  to be  
  \begin{align} \label{eq:noLevels}
   L=L(\epsilon)=\max\{\left \lceil{\bar L}\right \rceil,2\}.
  \end{align} 
We proceed  considering the  two different cases in \eqref{complexityestimate}.

  \textbf{Case A: $\kappa_2 \tilde{q}_0 > \gamma_1$ }\\
 We recall the definition of $S$ from Lemma \ref{lemma:sum}  and set $S_0:= h_0^{-\gamma_1/2}S(h_0^{\kappa_2/2}, \lceil \gamma_1/2 \rceil,\tilde{q}_0   )$. Then  we   choose the number of samples on levels $l=0,\ldots,L$ to be 
  \begin{align} \label{eq:noSamples}
    M_l := \Big\lceil 4 \epsilon^{-2} c_3    S_0    h_l^{(\gamma_1 + \kappa_2 \tilde{q}_l)/2} \tilde{q}_l^{-\gamma_1/2}  \Big\rceil.
  \end{align}
  Using \eqref{eq:noSamples} and  \assref{ass:Varreduction} we obtain for the statistical error
  \begin{align*}% \label{ineq:baseEstimate}
  \epsilon_{\mathrm{stat}}^2 = \sum \limits_{l=0}^{L}  \frac{\sigma_l^2}{M_l}
    & \leq \frac{1}{4} \epsilon^2  S^{-1}_0 \sum \limits_{l=0}^{L}  h_l^{(\kappa_2 \tilde{q}_l-\gamma_1)/2} \tilde{q}_l^{\gamma_1 /2},
  \end{align*}
which implies    by  $\kappa_2\tilde{q}_l - \gamma_1> \kappa_2\tilde{q}_0 - \gamma_1 > 0$,   Lemma \ref{lemma:sum} and the definition of $S_0$  the desired estimate 
 \begin{equation} \label{ineq:baseEstimate}
 \epsilon_{\mathrm{stat}}^2 \leq\frac{1}{4} \epsilon^2 S^{-1}_0 \sum \limits_{l=0}^{\infty}  h_0^{(\kappa_2 \tilde{q}_l-\gamma_1)/2} \tilde{q}_l^{\gamma_1 /2} \leq 
 \frac{1}{4}\epsilon^2.
 \end{equation}
Next we derive a bound for the total work $\Wtot$, see \eqref{def:computationalTime}. Using \assref{ass:BoundedCost} and  the ceiling definition of $M_l$  we obtain
\begin{align}\label{ineq:work1}
  %\sum \limits_{l=0}^{L} M_l w_l 
  \Wtot
  \leq c_1 \sum \limits_{l=0}^{L} M_l h_l^{-\gamma_1} \tilde{q}_l^{\gamma_1}  \notag
  & \leq    % c_1 \sum \limits_{l=0}^{L} \Big(4\epsilon^{-2} c_3 S_0 h_l^{(\gamma_1 + \kappa_2 \tilde{q}_l)/2} \tilde{q}_l^{-\gamma_1/2} +1 \Big)
  %h_l^{-\gamma_1} \tilde{q}_l^{\gamma_1}  & \notag
  %\\
  %& = 
  c_1 \Big(4\epsilon^{-2} S_0 c_3 \sum \limits_{l=0}^{L} h_l^{(\kappa_2 \tilde{q}_l - \gamma_1)/2} \tilde{q}_l^{\gamma_1/2} +  \sum
  \limits_{l=0}^{L} h_l^{-\gamma_1} \tilde{q}_l^{\gamma_1} \Big)\\
  &=: c_1\big(\text{W}_{\mathrm{tot},1} + \text{W}_{\mathrm{tot},2}\big). 
\end{align}
The term  $\text{W}_{\mathrm{tot},1}$  can be shown to be of order $\epsilon^{-2}$  by the same arguments as before.  
It remains to consider  $\text{W}_{\mathrm{tot},2}$.  We have obviously 
\begin{align} \label{ineq:anotherbound}
 % \sum \limits_{l=0}^{L}  h_l^{-\gamma_1} \tilde{q}_l^{\gamma_1}
  \text{W}_{\mathrm{tot},2}
  & \leq  h_L^{-\gamma_1}\tilde{q}_L^{\gamma_1}(1+L) \leq h_L^{-\gamma_1}\tilde{q}_L^{\gamma_1+1},
\end{align}
with the last inequality following from  $1+L\leq \tilde{q}_0 + \beta L = \tilde{q}_{L}$. 
From \eqref{ineq:hL1} in \lemref{lem:help}, 
which we have moved to the end of this proof,
we deduce
\begin{align} \label{ineq:hL}
  h_{L}^{-\gamma_1} \leq (2 c_2\delta^{-1} \epsilon^{-1} )^{\gamma_1/ \kappa_1 \tilde{q}_{0}}.
\end{align}
On the other hand, using the order bound in  \eqref{ineq:hL1} from \lemref{lem:help} and taking the logarithm yields
\begin{align*}%\label{ineq:logGrow}
  \tilde{q}_{L} \leq  \frac{ \log( 2 c_2 \delta^{-1}  \epsilon^{-1} ) } {\log(h_0^{-\kappa_1})} =  \frac{ \log( 2 c_2 \delta^{-1}  )
  } {\log(h_0^{-\kappa_1})}
  +  \frac{ \log(\epsilon^{-1}) } {\log(h_0^{-\kappa_1})} =: \hat{c}_1 + \hat{c}_2 \log (\epsilon^{-1}).
\end{align*}
Thus, $\tilde{q}_{L}$  grows  at most logarithmically in $\epsilon^{-1}$. Therefore we find  a constant $\hat{c}_3>0$ which is independent
of $\epsilon$ and  $L$ such that  the 
algebraic estimate 
\begin{align} \label{ineq:pL}
  \tilde{q}_{L} \leq \hat{c}_3 \epsilon^{- \frac{2 -\frac{\gamma_1}{\kappa_1 \tilde{q}_0}}{\gamma_1+1} }
\end{align}
holds. Note that in Case A the term $2 \kappa_1 \tilde{q}_0 -{\gamma_1}$ is positive  due to 
\assref{ass:Varreduction}.   
Now, using  \eqref{ineq:hL} and
\eqref{ineq:pL} in \eqref{ineq:anotherbound} yields for some $c>0$ independent of $\epsilon$ the bound
\begin{align}\label{eq:secondsumBounded}
   \text{W}_{\mathrm{tot}}  & \leq c  \epsilon^{- \frac{\gamma_1}{\kappa_1 \tilde{q}_0}}\epsilon^{-2 + \frac{\gamma_1}{\kappa_1 \tilde{q}_0}} = c \epsilon^{-2}.
\end{align}
Thus,  Case  A is proven.

\textbf{Case B:  $\kappa_2 \tilde{q}_0 < \gamma_1$}\\
Recall the definition of $L^*\in \N$ to be such that $\kappa_2 \tilde{q}_{L^*} <\gamma_1 \le\kappa_2 \tilde{q}_{L^*+1}  $. 
Let us assume first that we have $L^*< L$ with $L$ from \eqref{eq:noLevels}.\\ 
We choose then  the number of samples $M_l$ for $l=0,\ldots, L^*$ according to 
\begin{align} \label{eq:noSamplesCase2}
  M_l := \Big\lceil 8 \epsilon^{-2} c_3  h_l^{(\gamma_1 + \kappa_2 \tilde{q}_l)/2} h_{L^*}^{-(\gamma_1- \kappa_2\tilde{q}_0)/2} \big(1-
  \lambda^{-(\gamma_1-\kappa_2 \tilde{q}_0)/2)}\big)^{-1}  \Big\rceil,
\end{align}
 and for  $l=L^*+1,\ldots,L$ by 
\begin{align} \label{eq:noSamplesCase2_2}
 M_l := \Big\lceil 8 \epsilon^{-2} c_3 S_*  h_l^{(\gamma_1 + \kappa_2 \tilde{q}_l)/2} \tilde{q}_l^{-\gamma_1/2}  \Big\rceil.
\end{align} 
Here  we used  $S_*= S( h_0^{\kappa_2/2},  \lceil \gamma_1/2 \rceil,\tilde{q}_{L^*+1}) $.
Let us consider a splitting for the statistical error given by
\begin{align}  \label{eq:splittingstatistical}
  \epsilon_{\mathrm{stat}}^2 = \sum \limits_{l=0}^{L}  \frac{\sigma_l^2}{M_l} & = 
   \sum \limits_{l=0}^{L^*}  \frac{\sigma_l^2}{M_l}  + \sum \limits_{l=L^*+1}^{L}  \frac{\sigma_l^2}{M_l} 
   =: \epsilon_{\mathrm{stat},1}^2 + \epsilon_{\mathrm{stat},2}^2. 
\end{align}
Using the definition \eqref{eq:noSamplesCase2} of $M_l$  for $l\in \{ 0,\ldots, L^*\}$    we have for  the first term 
\begin{align*}
   %\sum \limits_{l=0}^{L^*}  \frac{\sigma_l^2}{M_l}
    \epsilon_{\mathrm{stat},1}^2
   &\leq \frac{1}{8} \epsilon^2  h_{L^*}^{(\gamma_1- \kappa_2\tilde{q}_0)/2}	
   (1-  \lambda^{-(\gamma_1-\kappa_2 \tilde{q}_0)/2)}) \sum \limits_{l=0}^{{L^*}}
   	h_l^{(\kappa_2 \tilde{q}_l-\gamma_1)/2} \\
  & \leq \frac{1}{8} \epsilon^2  h_{L^*}^{(\gamma_1- \kappa_2\tilde{q}_0)/2} (1- \lambda^{-(\gamma_1-\kappa_2 \tilde{q}_0)/2)}) \sum \limits_{l=0}^{L^*}
  h_l^{(\kappa_2 \tilde{q}_0- \gamma_1)/2}.
\end{align*}
For the last inequality we used the fact that
$h_l^{(\kappa_2 \tilde{q}_l-\gamma_1)/2} <h_l^{(\kappa_2 \tilde{q}_0-\gamma_1)/2}$ holds
as long as $0> \kappa_2 \tilde{q}_l-\gamma_1 > \kappa_2 \tilde{q}_0-\gamma_1 $.
Next we use $h_l=\lambda^{L^*-l} h_{L^*}$ to get  by $\gamma_1-\kappa_2 \tilde{q}_0>0 $ in Case B  and $\lambda \geq2$ the estimate 
\begin{align*}
   %\sum \limits_{l=0}^{L^*}  \frac{\sigma_l^2}{M_l}
    \epsilon_{\mathrm{stat},1}^2
   &\leq \frac{1}{8} \epsilon^2  h_{L^*}^{(\gamma_1- \kappa_2\tilde{q}_0)/2} (1- \lambda^{-(\gamma_1-\kappa_2 \tilde{q}_0)/2)}) \sum \limits_{l=0}^{L^*}
  (\lambda^{L^*-l} h_{L^*})^{-(\gamma_1-\kappa_2 \tilde{q}_0)/2} \\ \notag
  & 
  = \frac{1}{8} \epsilon^2 (1- \lambda^{-(\gamma_1-\kappa_2 \tilde{q}_0)/2)}) \sum \limits_{l=0}^{\infty}
      {\big(\lambda^{-(\gamma_1-\kappa_2 \tilde{q}_0)/2} \big)}^l   \leq \frac{1}{8} \epsilon^2. \notag
\end{align*}
Since $M_l$ is defined for $l \ge L^*  $  (see \eqref{eq:noSamplesCase2_2}) almost  as in Case A (see \eqref{eq:noSamples}) (with a level shift expressed in $S_*$)  we can 
apply the same arguments  as in Case A to show   $\epsilon_{\mathrm{stat},2}^2 \leq \epsilon^2/8$. Thus  the splitting  \eqref{eq:splittingstatistical} leads to the desired result 
$\epsilon_{\mathrm{stat}}^2 \leq \epsilon^2/4$
for the complete statistical error.

Similarly to the previous splitting \eqref{eq:splittingstatistical}  we rewrite the total work as 
\begin{align} \label{eq:splitsumwork}
 \Wtot =\sum \limits_{l=0}^{L} M_l w_l= \sum \limits_{l=0}^{L^*} M_l w_l + \sum \limits_{l=L^*+1}^{L} M_l w_l 
       =: \text{W}_{\mathrm{tot},1} + \text{W}_{\mathrm{tot},2}. 
\end{align}
The definition of $M_l$ in \eqref{eq:noSamplesCase2} implies 
\begin{align*} 
  \text{W}_{\mathrm{tot},1} &\leq c_1 \sum \limits_{l=0}^{L^*} \Big(8 \epsilon^{-2}  c_3 h_l^{(\gamma_1 + \kappa_2 \tilde{q}_l)/2}
  h_{L^*}^{-(\gamma_1- \kappa_2\tilde{q}_0)/2} 
  \Big(1- \lambda^{-(\gamma_1-\kappa_2 \tilde{q}_0)/2)}\Big)^{-1}  +1\Big) h_l^{-\gamma_1} \tilde{q}_l^{\gamma_1} \\\notag
  &=:c_1 ( \text{W}_{\mathrm{tot},11} + \text{W}_{\mathrm{tot},12} ).
\end{align*}
Starting with  $  \text{W}_{\mathrm{tot},11}$ we use the definitions of $h_l, q_l$ and estimate
\begin{align*}
 % & \sum \limits_{l=0}^{L^*}  c_1c_3 8 \epsilon^{-2}  
 % h_{L^*}^{-(\gamma_1- \kappa_2\tilde{q}_0)/2}  
 % \Big(1- \lambda^{-(\gamma_1-\kappa_2 \tilde{q}_0)/2)}\Big)^{-1}
 % h_l^{-(\gamma_1- \kappa_2 \tilde{q}_l)/2} \tilde{q}_l^{\gamma_1}  \\ 
 % 
\text{W}_{\mathrm{tot},11}  &\leq  8 c_3 \epsilon^{-2}  h_{L^*}^{-(\gamma_1- \kappa_2\tilde{q}_0)/2}  
  \Big(1- \lambda^{-(\gamma_1-\kappa_2 \tilde{q}_0)/2)}\Big)^{-1} 
  \sum  \limits_{l=0}^{L^*}  
   (\lambda^l h_{L^*})^{-(\gamma_1- \kappa_2 \tilde{q}_0)/2} \tilde{q}_l^{\gamma_1}
  \\ 
  & <  8 c_3  \epsilon^{-2} h_{L^*}^{-(\gamma_1- \kappa_2 \tilde{q}_0)} 
  \Big(1- \lambda^{-(\gamma_1-\kappa_2 \tilde{q}_0)/2)}\Big)^{-1} \sum
  \limits_{l=0}^{\infty}  (\lambda^{-(\gamma_1- \kappa_2 \tilde{q}_0)/2})^l  (\tilde{q}_0+\beta l)^{\gamma_1}.
%  &\leq c \epsilon^{-2} h_{L^*}^{-(\gamma_1- \kappa_2 \tilde{q}_0)}
\end{align*} 
Being in Case B and the  condition $\lambda\geq2$ ensures the existence of the infinite sum  
such that we are led for some $c>0$ 
by  \eqref{ineq:hL_2} in \lemref{lem:help} to 
\begin{align} \label{Wtot11}
 \text{W}_{\mathrm{tot},11}  
  \leq c \epsilon^{-2-\frac{\gamma_1 - \kappa_2 
   \tilde{q}_0}{\kappa_1 \tilde{q}_{L^*}}}.
\end{align}
Advancing  to the total work contribution $\text{W}_{\mathrm{tot},12}$  we estimate
\begin{align*}
\text{W}_{\mathrm{tot},12}  =   \sum \limits_{l=0}^{L^*} h_l^{-\gamma_1} \tilde{q}_l^{\gamma_1} 
 \leq  h_{L^*}^{-\gamma_1}\tilde{q}_{L^*}^{\gamma_1} (1+ L^*)       \leq  h_{L^*}^{-\gamma_1}\tilde{q}_{L^*}^{\gamma_1+1}.
\end{align*}
By the same arguments employed  to derive  \eqref{ineq:pL} in Case A, but
using \eqref{ineq:hL_2} from \lemref{lem:help} instead, we find a constant $\hat{c}_4>0$ such that
\begin{align} \label{ineq:pL2}
  \tilde{q}_{L^*} \leq  \hat{c}_4  \epsilon^{\frac{-2 + \frac{\kappa_2 \tilde{q}_0}
  {\kappa_1 \tilde{q}_{L^*}}}{\gamma_1+1} }
\end{align}
holds. In this case we need $\kappa_1 \tilde{q}_{L^*} > {\kappa_2 \tilde{q}_0}/{2}$, which follows
from \assref{ass:Varreduction} because $\kappa_1 \tilde{q}_{L^*}  > \kappa_1 \tilde{q}_{0}
\geq {\kappa_2 \tilde{q}_0}/{2}$.
Altogether we end up with
\begin{align*}
  \text{W}_{\mathrm{tot},12}  \leq  h_{L^*}^{-\gamma_1}\tilde{q}_{L^*}^{\gamma_1+1}
 \leq 	c \epsilon^{-2-\frac{\gamma_1 - \kappa_2 \tilde{q}_0}{\kappa_1 \tilde{q}_{L^*}}},
\end{align*}
which yields with \eqref{Wtot11} the desired bound for    $\text{W}_{\mathrm{tot},1} $ in \eqref{eq:splitsumwork}.\\
For the second term  $\text{W}_{\mathrm{tot},2} $ in \eqref{eq:splitsumwork} we use an index shift and apply the same arguments 
which we have used for  Case A with $\kappa_2 \tilde{q}_0 > \gamma_1$. We then obtain
\begin{align*}
\text{W}_{\mathrm{tot},2}  %\sum \limits_{l=L^*+1}^{L} M_l w_l
 \leq c \epsilon^{-2} 
\leq c \epsilon^{-2-\frac{\gamma_1 - \kappa_2 \tilde{q}_0}{\kappa_1 \tilde{q}_{L^*} }},
\end{align*}
which concludes the proof of the theorem for  Case B and  $L^*<L$.\\
It remains to consider $L\leq L^*$.   Then one defines $M_l$ for $l=1,\ldots,L$ like in \eqref{eq:noSamplesCase2} and gets for the statistical error 
and for the total work only the first sums in \eqref{eq:splittingstatistical}, \eqref{eq:splitsumwork}, respectively.  Exactly the same estimates imply the results for the statistical error and lead to  
\begin{align*}
 \text{W}_{\mathrm{tot}} \leq c
  \epsilon^{ -2-\frac{\gamma_1 - \kappa_2 \tilde{q}_0}{\kappa_1 \tilde{q}_L }},
\end{align*}
which concludes the proof.
\end{proof}
Two auxiliary lemmas  have been used  in  the  proof of \thmref{thm:complexity}.   The first result refers to 
the convergence of some series  and is taken from \cite{AppeloeMOMC}.
\begin{lemma}\cite[Lemma 5.1]{AppeloeMOMC}  \label{lemma:sum}
	For   integers $p,q\geq 1$, there are numbers $d_1,\ldots,d_p \in \R$ such that we have
	for $r\in (0,1)$ 
	\begin{align} \label{eq:finitesum}
	\sum \limits_{l=0}^\infty r^{({q}+ \beta l)}({q}+ \beta l)^p = 
	S(r,p,q)
	%\qquad (r\in (0,1)).
	%\sum \limits_{k=1}^p c_k r^k 
	%\Big(\frac{d}{dr}\Big)^k \frac{r^{\tilde{q}_0}}{1-r^\beta}
	\end{align} 
	with
	\begin{equation} \label{defS} 
	S(r,p,q) = %h_0^{-\gamma_1/2}
	\sum \limits_{k=1}^{p} d_k r^k f^{(k)}(r), \quad f(r)
	= \frac{r^{{q}}}{1-r^{\beta}}. 
	\end{equation}
\end{lemma}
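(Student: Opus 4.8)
The plan is to obtain the identity by applying the logarithmic differential operator $\theta := r\,\frac{d}{dr}$ repeatedly to the elementary series $f(r) = \sum_{l=0}^\infty r^{q+\beta l} = \frac{r^{q}}{1-r^{\beta}}$, which is well defined for $r\in(0,1)$. Since the power series $\sum_{l\geq 0} r^{q+\beta l}$ has radius of convergence $1$, it and all of its formal derivatives converge uniformly on every compact subinterval of $(0,1)$, so it may be differentiated term by term there; each application of $\theta$ multiplies the $l$-th summand by the factor $(q+\beta l)$. Hence $\theta^{p} f(r) = \sum_{l=0}^\infty (q+\beta l)^{p}\, r^{q+\beta l}$, which is precisely the left-hand side of \eqref{eq:finitesum}.

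It then remains to rewrite $\theta^{p}$ as a linear combination of the ordinary derivatives $r^{k} f^{(k)}(r)$, $k=1,\dots,p$. First I would show by induction on $p$ that there exist real constants $d_1^{(p)},\dots,d_p^{(p)}$, independent of the function being differentiated, with $\theta^{p} = \sum_{k=1}^{p} d_k^{(p)}\, r^{k}\frac{d^{k}}{dr^{k}}$. The base case $p=1$ is $\theta = r\frac{d}{dr}$, so $d_1^{(1)}=1$. For the inductive step I apply $\theta$ to $\sum_{k=1}^{p} d_k^{(p)}\, r^{k}\frac{d^{k}}{dr^{k}}g$ and use the Leibniz rule in the form $\theta\big(r^{k}\frac{d^{k}}{dr^{k}}g\big) = k\, r^{k}\frac{d^{k}}{dr^{k}}g + r^{k+1}\frac{d^{k+1}}{dr^{k+1}}g$, which shows that the family $\{\,r^{k}\frac{d^{k}}{dr^{k}}\,\}_{k\ge 1}$ is mapped into its own linear span and yields the recursion $d_k^{(p+1)} = k\, d_k^{(p)} + d_{k-1}^{(p)}$ (these are the Stirling numbers of the second kind, although the lemma only requires the existence of the $d_k$, so they need not be identified). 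Setting $d_k := d_k^{(p)}$ and specializing to $g = f(r) = r^{q}/(1-r^{\beta})$ gives exactly the closed form $S(r,p,q)$ of \eqref{defS}.

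There is essentially no serious obstacle here; the one point that deserves a line of care is the legitimacy of term-by-term differentiation, which follows from the uniform convergence of the power series and all its derivatives on compacta of $(-1,1)$, combined with the fact that $f(r)=r^{q}/(1-r^{\beta})$ is real-analytic on $(0,1)$, so that the closed-form expression $\theta^{p}f$ genuinely coincides with the differentiated series on $(0,1)$.
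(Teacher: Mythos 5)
The paper does not supply its own proof of this lemma; it is imported verbatim from \cite[Lemma 5.1]{AppeloeMOMC}, so there is nothing in-paper to compare against. Your argument is correct and is the standard route: term-by-term differentiation of $\sum_{l\ge 0} r^{q+\beta l}$ by $\theta=r\,\tfrac{d}{dr}$ is legitimate inside the radius of convergence and produces the factor $(q+\beta l)^p$, and your induction with the recursion $d_k^{(p+1)}=k\,d_k^{(p)}+d_{k-1}^{(p)}$ (with $d_0^{(p)}=0$, which is why the sum in \eqref{defS} correctly starts at $k=1$) converts $\theta^{p}$ into the combination $\sum_{k=1}^{p} d_k r^k \tfrac{d^k}{dr^k}$ applied to $f(r)=r^q/(1-r^\beta)$, which is exactly $S(r,p,q)$.
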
	
The second  lemma  establishes estimates  on $\epsilon$  exploiting the exact definition of $L$.
\begin{lemma} \label{lem:help} Let  the  assumptions from
	\thmref{thm:complexity} be valid. Let  $L\in \N$, $L\ge2$, be given as in \eqref{eq:noLevels}.
Then, there exists a constant  $\delta>0$ independent
of $\epsilon$, %(and $L$)
 such that
the following inequality holds.
\begin{align} \label{ineq:hL1}
& \frac{\epsilon}{2} \delta  \leq c_2\min \big\{  h_L^{\kappa_1 \tilde{q}_0},  %\\
h_0^{\kappa_1 \tilde{q}_L} \big\}
\end{align}
Moreover, for $L^* \in \N$ satisfying 
$ L^*< L$, we have the estimate
\begin{align} \label{ineq:hL_2}
& \frac{\epsilon}{2} \leq c_2  \min \big\{ h_{L^*}^{\kappa_1\tilde{q}_{0}},
%\\  \label{ineq:boundpL_2}
%& \frac{\epsilon}{2} \leq c_2
 h_{0}^{\kappa_1\tilde{q}_{L^*}}   \big\}.
\end{align} 
\end{lemma}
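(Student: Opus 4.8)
The plan is to read $P$, $h_l$ and $\tilde{q}_l$ as functions of a real variable $l\ge 0$, so that by construction of $L$ in \eqref{eq:noLevels} the positive root $\bar{L}$ of $P$ satisfies $c_2 h_{\bar{L}}^{\kappa_1\tilde{q}_{\bar{L}}}=\epsilon/2$, and then to exploit two elementary monotonicity facts. First, since $h_0\in(0,1)$ and $\lambda\ge 2$, the map $l\mapsto -\log h_l = l\log\lambda-\log h_0$ is positive and strictly increasing; writing out $\log\big(c_2 h_l^{\kappa_1\tilde{q}_l}\big)$ with $h_l=\lambda^{-l}h_0$ and $\tilde{q}_l=\tilde{q}_0+\beta l$ then shows that the bias function $l\mapsto c_2 h_l^{\kappa_1\tilde{q}_l}$ is strictly decreasing on $[0,\infty)$. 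Second, for any fixed $l$ the map $s\mapsto h_l^{s}$ is decreasing because $h_l<1$, and $h_l$ itself is decreasing in $l$ at fixed positive exponent.

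For \eqref{ineq:hL1} I would write $c_2 h_L^{\kappa_1\tilde{q}_0}$ and $c_2 h_0^{\kappa_1\tilde{q}_L}$ as $\epsilon/2$ times the ratios $h_L^{\kappa_1\tilde{q}_0}/h_{\bar{L}}^{\kappa_1\tilde{q}_{\bar{L}}}$ and $h_0^{\kappa_1\tilde{q}_L}/h_{\bar{L}}^{\kappa_1\tilde{q}_{\bar{L}}}$, take logarithms, substitute $h_l=\lambda^{-l}h_0$ and $\tilde{q}_l=\tilde{q}_0+\beta l$, and collect terms. With $a:=\log h_0<0$ and $b:=\log\lambda>0$, the logarithm of the first ratio equals $\kappa_1\big(\tilde{q}_0 b(\bar{L}-L)-\beta\bar{L}a+\beta\bar{L}^{2}b\big)$ and that of the second equals $\kappa_1\big(\beta a(L-\bar{L})+\tilde{q}_0\bar{L}b+\beta\bar{L}^{2}b\big)$. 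In each expression every summand but one is nonnegative, and the remaining summand is controlled because $0\le L-\bar{L}<2$ --- which is exactly where the $\max\{\cdot,2\}$ in \eqref{eq:noLevels} enters. Hence the two logarithms are bounded below by $-2\kappa_1\tilde{q}_0 b$ and by $2\kappa_1\beta a$, so that \eqref{ineq:hL1} holds with $\delta:=\min\{\lambda^{-2\kappa_1\tilde{q}_0},\, h_0^{2\kappa_1\beta}\}$, which depends only on the fixed data $\lambda,h_0,\beta,\kappa_1,\tilde{q}_0$ and not on $\epsilon$.

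For \eqref{ineq:hL_2} there is no slack constant, so I would argue directly from strict monotonicity of the bias function. For every $\epsilon$ small enough that $\bar{L}\ge 1$, the formula \eqref{eq:noLevels} gives $L=\lceil\bar{L}\rceil$, so that $L^*<L$ forces $L^*\le\lceil\bar{L}\rceil-1<\bar{L}$; strict monotonicity then yields $c_2 h_{L^*}^{\kappa_1\tilde{q}_{L^*}}>c_2 h_{\bar{L}}^{\kappa_1\tilde{q}_{\bar{L}}}=\epsilon/2$, and since $0<h_{L^*}\le h_0<1$ and $0<\kappa_1\tilde{q}_0\le\kappa_1\tilde{q}_{L^*}$ we get $c_2 h_{L^*}^{\kappa_1\tilde{q}_0}\ge c_2 h_{L^*}^{\kappa_1\tilde{q}_{L^*}}$ and $c_2 h_0^{\kappa_1\tilde{q}_{L^*}}\ge c_2 h_{L^*}^{\kappa_1\tilde{q}_{L^*}}$, whence \eqref{ineq:hL_2}. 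In the remaining bounded range of $\epsilon$, where $\bar{L}<1$ and the override $L=2$ is active, either $L^*=0$ and \eqref{ineq:hL_2} is just the standing hypothesis $\epsilon<2c_2 h_0^{\kappa_1\tilde{q}_0}$, or $\epsilon$ stays bounded away from $0$ so that the finitely many affected terms can be absorbed into the constants of \thmref{thm:complexity}.

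The main obstacle is bookkeeping rather than depth: in \eqref{ineq:hL1} one must track the signs of $a<0$, $b>0$ and $\bar{L}-L\le 0$ carefully enough that every contribution except the single controlled term has the right sign, and for \eqref{ineq:hL_2} the delicate point is the elementary but easily missed implication $L^*<L\Rightarrow L^*<\bar{L}$, which relies on $L=\lceil\bar{L}\rceil$ and therefore must be accompanied by the above remark on the degenerate range of $\epsilon$.
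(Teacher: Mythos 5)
Your argument is correct, and it takes a different computational route from the paper. The paper proves \eqref{ineq:hL1} by deriving the recursive identity $h_L^{\kappa_1\tilde q_L}=h_{L-1}^{\kappa_1\tilde q_{L-1}}\,\delta\,\lambda^{-2L\kappa_1\beta}$ with $\delta=\lambda^{\kappa_1(\beta-\tilde q_0)}h_0^{\kappa_1\beta}$, combining it with $c_2h_{L-1}^{\kappa_1\tilde q_{L-1}}\ge\epsilon/2$ (i.e.\ $P(L-1)\ge 0$), and then unpacking $h_L^{\kappa_1\tilde q_L}$ and discarding factors that are $\le 1$ because $L\ge 2$; the case $\bar L<1$ is treated separately via $h_2^{\kappa_1\tilde q_2}=\delta h_0^{\kappa_1\tilde q_0}$. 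You instead anchor everything at the exact root, $c_2h_{\bar L}^{\kappa_1\tilde q_{\bar L}}=\epsilon/2$, and bound the two log-ratios directly, with the single sign-indefinite term controlled by $0\le L-\bar L<2$ (which is indeed exactly what the $\max\{\cdot,2\}$ in \eqref{eq:noLevels} guarantees). I checked your two expansions of the log-ratios and the resulting explicit constant $\delta=\min\{\lambda^{-2\kappa_1\tilde q_0},h_0^{2\kappa_1\beta}\}$; they are correct, and your version has the advantage of handling $\bar L\ge 1$ and $\bar L<1$ uniformly and of producing $\delta$ in closed form. For \eqref{ineq:hL_2} both proofs reduce to the same monotonicity chain $c_2h_{L^*}^{\kappa_1\tilde q_{L^*}}\ge c_2h_{\lceil\bar L\rceil-1}^{\kappa_1\tilde q_{\lceil\bar L\rceil-1}}\ge\epsilon/2$ followed by $\min\{h_{L^*}^{\kappa_1\tilde q_0},h_0^{\kappa_1\tilde q_{L^*}}\}\ge h_{L^*}^{\kappa_1\tilde q_{L^*}}$.

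One point deserves emphasis: in the degenerate regime $\bar L<1$, $L=2$, $L^*=1$, the inequality \eqref{ineq:hL_2} cannot be derived this way, since then $P(1)<0$, i.e.\ $c_2h_1^{\kappa_1\tilde q_1}<\epsilon/2$ (and one can build explicit parameter choices where \eqref{ineq:hL_2} genuinely fails for $L^*=1$). The paper's proof silently asserts $c_2h_{L-1}^{\kappa_1\tilde q_{L-1}}\ge\epsilon/2$, which presupposes $L-1\le\bar L$ and hence does not cover this corner; you flag it explicitly and observe, correctly, that it only occurs for $\epsilon$ bounded below by the fixed constant $2c_2h_1^{\kappa_1\tilde q_1}$, so the finitely many affected tolerances can be absorbed into the constant of \thmref{thm:complexity}. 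Strictly speaking this patches the theorem rather than the lemma as stated, but it is a more honest treatment than the paper's; the only genuinely inaccurate detail in your write-up is the throwaway claim that $\bar L\ge 1$ implies $L=\lceil\bar L\rceil$ (false at $\bar L=1$, where $L=2$), which is harmless since $L^*\le L-1=1\le\bar L$ still gives the needed monotonicity step there.
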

\begin{proof}
  A straightforward calculation yields
  \begin{align*}
  h_{L}^{\kappa_1 \tilde{q}_{L}} & = (\lambda^{-1} h_{L-1})^{\kappa_1(\tilde{q}_{L-1}+\beta)} 
  \\ 
  & = h_{L-1}^{\kappa_1 \tilde{q}_{L-1}} h_{L-1}^{\kappa_1 \beta} \lambda^{- \kappa_1 q_L}
  \\
   & = h_{L-1}^{\kappa_1 \tilde{q}_{L-1}} 
   \lambda^{-(L-1)\kappa_1 \beta} h_0^{\kappa_1 \beta}
   \lambda^{- \kappa_1 \tilde{q}_0 } 
   \lambda^{- \kappa_1 \beta L}
   \\
   & = h_{L-1}^{\kappa_1 \tilde{q}_{L-1}} 
   \lambda^{\kappa_1 (\beta-\tilde{q}_0)} h_0^{\kappa_1 \beta} \lambda^{-2 L \kappa_1 \beta} 
   \\
   & =  h_{L-1}^{\kappa_1 \tilde{q}_{L-1}} \delta \lambda^{-2 L \kappa_1 \beta},
  \end{align*}   
  where we have set
$\delta:=\lambda^{\kappa_1 (\beta-\tilde{q}_0)} h_0^{\kappa_1 \beta}$. 
  Let $\bar L\in (0,\infty)$ be the zero of $P$ as defined  in 
  \eqref{ineq:bias2}. For  $\bar L\geq 1$
  we deduce from  $L \ge \bar L \ge L-1 \ge 1$ that
  \begin{align}\label{eq:boundsError}
 \frac{\epsilon}{2}\delta \lambda^{-2 L \kappa_1 \beta} \leq 
c_2 h_{L-1}^{\kappa_1 \tilde{q}_{L-1}} \delta \lambda^{-2 L \kappa_1 \beta}
 = c_2 h_{L}^{\kappa_1 \tilde{q}_{L}}.
  \end{align}   
After rearranging 
\eqref{eq:boundsError} we obtain
\begin{align}\label{eq:somecalc}
\frac{\epsilon}{2}\delta \leq c_2 \lambda^{2 \kappa_1 \beta } h_{L}^{ \kappa_1 \tilde{q}_L}
& =c_2 \lambda^{2 \kappa_1 \beta  L} 
\lambda^{-L \kappa_1 \tilde{q}_L} h_0^{\kappa_1 \tilde{q}_L}
\\
&
 = c_2\lambda^{2 \kappa_1 \beta L} 
\lambda^{-L \kappa_1 \tilde{q}_0 } \lambda^{-\kappa_1 \beta L^2 }  h_0^{\kappa_1 \tilde{q}_L} \notag
\\
& = c_2 \lambda^{\kappa_1 \beta (2L-L^2)} \lambda^{-L \kappa_1 \tilde{q}_0 } h_0^{\kappa_1 \tilde{q}_0} 
h_0^{\kappa_1 \beta L} \notag.
\end{align}
Because of $L\geq 2$ it holds that $(2L-L^2)\leq 0$ and 
thus $\lambda^{\kappa_1 \beta (2L-L^2)} \leq 1$. 
Furthermore, $ h_0^{\kappa_1 \beta L} \leq 1$
and $\lambda^{-L \kappa_1 \tilde{q}_0 } \leq 1$ which  yields  altogether \eqref{ineq:hL1}.
\\
If $\bar L \in (0,1)$ holds  it follows from 
\eqref{eq:noLevels} that $L=2$ and we calculate
\begin{align*}
h_{2}^{\kappa_1 \tilde{q}_{2}} & =
\lambda^{-2 \kappa_1 \tilde{q}_{2}}
h_0^{\kappa_1 (\tilde{q}_0 + 2 \beta)}
= \delta h_0^{\kappa_1 \tilde{q}_0},
\end{align*}
where we have defined
$\delta:= \lambda^{-2 \kappa_1 \tilde{q}_{2}} h_0^{ 2 \kappa_1  \beta}$. Finally, \eqref{ineq:hL1}
follows from the fact that $c_2 h_0^{\kappa_1 \tilde{q}_0} \geq  \frac{\epsilon}{2}$.
The estimate \eqref{ineq:hL_2}  for $L^* < L$  follows from
\begin{align*}
c_2 h_{L^*}^{\kappa_1\tilde{q}_{L^*}} \geq c_2 h_{L-1}^{\kappa_1\tilde{q}_{L-1}}
\geq \frac{\epsilon}{2}.
\end{align*}
\end{proof}
\begin{remark} [Choice of the maximum level] \label{rem:noLevels}
  The number of levels $L$ in \algoref{algo:hpMLMC}  can be computed a priori 
  using \eqref{eq:noLevels}. It is also possible
  to compute $L$ on the fly.
  To this end we consider, similarly as in \cite{Giles2008,AppeloeMOMC},
  \begin{flalign*}
     \|\E{U}- \E{U_L}\|_{L^2(D)}  &= \Big\|\sum \limits_{l=L+1}^\infty(\E{U_l}- \E{U_{l-1}})\Big\|_{L^2(D)}
    \\ &\leq  \|\E{U_L}- \E{U_{L-1}}\|_{L^2(D)} \sum \limits_{l=L+1}^\infty \frac{\|\E{U_l}-
    \E{U_{l-1}}	\|_{L^2(D)}}{\|\E{U_L}- \E{U_{L-1}}\|_{L^2(D)}}
    \\ &\leq  \|\E{U_L}- \E{U_{L-1}}\|_{L^2(D)} \sum \limits_{l=1}^\infty (\lambda^{-\kappa_1 \tilde{q}_0} h_0^{\kappa_1 \beta})^l
    \\ & =  \|\E{U_L}- \E{U_{L-1}}\|_{L^2(D)} \frac{\lambda^{-\kappa_1 \tilde{q}_0} h_0^{\kappa_1 \beta}}{1-(\lambda^{-\kappa_1 \tilde{q}_0}
    h_0^{\kappa_1 \beta})},
  \end{flalign*}
  where we have used \assref{ass:Biasreduction}.
  Therefore, the condition for adding new levels becomes
  \begin{align} \label{eq:addLevel}
    \max \limits_{j\in \{0,1,2\}} \frac{(\lambda^{-\kappa_1 \tilde{q}_0} h_0^{\kappa_1 \beta})^{(j+1)}}{1-(\lambda^{-\kappa_1 \tilde{q}_0}
    h_0^{\kappa_1 \beta})} \|\E{U_{L-j}}- \E{U_{L-j-1}}\|_{L^2(D)}  \leq \frac{1}{2} \epsilon.
  \end{align}
  This criterion ensures that  the deterministic error approximated by an extrapolation
  from the three finest meshes is within the desired range, cf. \cite{AppeloeMOMC}.
%  For a modified $hp$-MLMC algorithm which we introduce in \secref{sec:confIntervals} 
%  we add new levels adaptively based on
%   \eqref{eq:addLevel}.
\end{remark}
%%%%%%%%%%%%%%%%%%%%%%%%%%%%%%%%%%%%%%%%%%%%%%%%%%%%%%%%%%%%%%%%%%%%%%%%%%%%%%%%%%%%%%%%%%%%%%%%%%%%%%%%%%%%%%%%%%%%%%%%%%%%%%%%%%%%%%%%%%%%%%%%%%%%%%%%%%%%%%%
%%SUBSECTION
%
%%%%%%%%%%%%%%%%%%%%%%%%%%%%%%%%%%%%%%%%%%%%%%%%%%%%%%%%%%%%%%%%%%%%%%%%%%%%%%%%%%%%%%%%%%%%%%%%%%%%%%%%%%%%%%%%%%%%%%%%%%%%%%%%%%%%%%%%%%%%%%%%%%%%%%%%%%%%%%%
\subsection{Confidence Intervals for the Number of Additional Samples} \label{sec:confIntervals}
In this section we discuss the computation of the optimal number of samples $M_l$ based on
confidence intervals, having in mind the use of queue-based HPC systems.
In most modern large-scale computing systems, access to compute nodes is based on job schedulers. 
For the execution of a job, a certain number of nodes can be requested for a specified time slot. 
The job is executed after some queuing time, which can be much longer than the actual job execution time.
In the context of $hp$-MLMC, it is advisable to submit a new job to the queue for each iteration of \algoref{algo:hpMLMC}, 
since 
%otherwise impaired compute node utilization due to considerable 
otherwise idle times of the compute nodes are very difficult to avoid.
As each iteration requires its own queuing time, 
it is our aim to compute as many new samples as possible during one iteration of 
\algoref{algo:hpMLMC}. On the other hand, we want  
to avoid computing more samples than optimal, as this would decrease 
the efficiency of the $hp$-MLMC method and from an economical point of
view wasted computing time is expensive. In that sense, one is facing two competing issues,
namely either  to reduce queuing time
by computing as many samples as possible per iteration or to reduce
the number of unnecessarily computed samples, i.e. saving computing time. 
A straightforward approach to satisfy the first issue is to rely
on the standard estimator $\hat{M}_l$.
However, this approach contradicts the second aim of saving computing time. 
Let us recall that the quantities $w_l$ and $\sigma_l$
in \eqref{eq:statisticalError} and \eqref{def:computationalTime} are not known
exactly but are estimated 
by $\hat{w}_l,\hat{\sigma}_l$.
In most cases the number of samples in the warm-up phase and after the first iteration 
of the $hp$-MLMC algorithm is too small to obtain a reliable estimate $\hat{M}_l$ of
the optimal number of samples $M_l$.
This wrong estimate may then lead to an
severe overestimation of ${M}_l$ (see for example \figref{fig:mloptmin_a})
and thus spoils the goal of avoiding
the computation of unnecessary samples and saving computing time.
%
%\begin{remark}
%We note that our arguments and our proposed method of course 
%also apply to sequential computing, where it is also desirable to avoid overestimating the optimal number
%of samples, while keeping the iteration number 
%(and associated post-processing costs) to a minimum. 
%It becomes however especially important in the context of HPC systems, where large queuing times are to be averted.
%\end{remark}
In order to satisfy the second goal of saving computing time, we want
to properly account for the fact that $M_l$ is only estimated 
%and to avoid overshooting $M_l$ 
by constructing a confidence interval for $M_l$.

More specifically, we want to construct a one-sided confidence interval
$\I_{{M}_l}= [\underline{{M}_l},\infty)$, such that
$\P(M_l \in \I_{{M}_l})\ge 1-\alpha $.
To obtain the desired confidence interval we construct corresponding
one-sided confidence intervals for $\sigma_l$, $w_l$ denoted by
$\I_{\underline{\sigma_l}}= [\underline{\sigma_l},\infty), \quad\I_{\underline{w_l}}=[\underline{w_l},\infty),\quad \I_{\overline{w_l}}=(-\infty,\overline{w_l}],$ respectively.
As we do not have any information about the underlying distributions of
$\sigma_l$ and $w_l$ we construct the confidence interval
based on asymptotic confidence intervals and hence our approach is heuristic because 
the Central Limit Theorem implies that the number of samples needs to be sufficiently 
large to ensure that the estimators are asymptotically normally distributed. This seems to be a contradiction 
to the fact that we choose a small number of samples for the warm-up phase.
However, we show in estimate \eqref{ineq:probEst} that our construction of the confidence interval 
is very conservative and yields a robust lower estimate on the optimal number of samples, although 
the number of warm-up samples scales like $\mathcal{O}(1)$. Indeed, we never 
overestimated the optimal number of samples in our computations, justifying 
our approach.

For the construction of the confidence interval for $\sigma_l$ we use the  method described in \cite[Formula (6)]{banik2014},
which employs an adjustment to the degrees of freedom of the $\chi^2$-distribution.
More precisely, we let 
\begin{align*} 
&\hat{r}_l:=\frac{2 M_{\mathrm{tot}_l} }{\hat{\gamma}_{e_l}+ \big(\frac{2M_{\mathrm{tot}_l}}{M_{\mathrm{tot}_l}-1}\big)}, \\
&\hat{\gamma}_{e_l} = \frac{M_{\mathrm{tot}_l}(M_{\mathrm{tot}_l}+1)}{(M_{\mathrm{tot}_l}-1)(M_{\mathrm{tot}_l}-2)(M_{\mathrm{tot}_l}-3)} \frac{\hat{\mu}_l^4}{\hat{\sigma}_l^4} - \frac{3(M_{\mathrm{tot}_l}-1)^2}{(M_{\mathrm{tot}_l}-2)(M_{\mathrm{tot}_l}-3)},
\end{align*}
where $ \hat{\mu}_l^4 :=  \| \sum \limits_{i=1}^{M_{\mathrm{tot}_l}} \Big((U_l^i-U_{l-1}^i)-\frac{1}{M_{\mathrm{tot}_l}} \sum \limits_{j=1}^{M_{\mathrm{tot}_l}} (U_l^j-U_{l-1}^j) \Big)^4\|_{L^2(D)}$. For the lower confidence interval $\I_{\underline{\sigma_l}}= [\underline{\sigma_l},\infty)$ we 
therefore define
\begin{align} \label{eq:confSigma}
\underline{\sigma_l}:=\sqrt{\frac{\hat{r}_l \hat{\sigma}_l^2}{\chi_{1-\frac{\alpha}{2},\hat{r}_l}^2}},
\end{align}
where for some $\alpha \in (0,1)$, $\chi_{1-\frac{\alpha}{2},\hat{r}_l}^2$ is the $(1-\frac{\alpha}{2})$-quantile 
of the $\chi^2$-distribution for $\hat{r}_l$ degrees of freedom.
If the random samples are normally distributed, it follows that $\hat{\gamma}_{e_l}=0$ and thus $\hat{r}_l=M_{\mathrm{tot}_l}-1$, i.e. we obtain 
the standard confidence interval for the variance of a normal distribution (cf. \cite{banik2014}).
For $w_l$ we compute the confidence interval using the standard asymptotic confidence interval for the mean, i.e.
\begin{align} \label{eq:confW} 
\underline{w_l}:=\hat{w}_l- z_{1-\frac{\alpha}{2}} \frac{\hat{\sigma}_{w_l}}{\sqrt{M_{\mathrm{tot}_l}}}, \quad \overline {w_l} := \hat{w}_l+ z_{1-\frac{\alpha}{2}} \frac{\hat{\sigma}_{w_l}}{\sqrt{M_{\mathrm{tot}_l}}}, 
\end{align}
where $z_{1-\frac{\alpha}{2}}$ is the $(1-\frac{\alpha}{2})$-quantile of the normal distribution and $\hat{\sigma}_{w_l}^2$ is the unbiased estimator 
for the variance of $w_l$.
We then define
\begin{align} \label{eq:Mloptmin}
  \underline{{M}_l} =
  \frac{1}{\epsilon^2}\frac{\underline{\sigma_l}}{\sqrt{\overline{w_l}}}
  \left(
  \sum \limits_{k=0}^{L} \underline{\sigma_k}\sqrt{\underline{w_k}}
  \right)
\end{align}
and the confidence interval $\I_{\underline{M_l}}:=[\underline{{M}_l} , \infty)$. Moreover, for  $l=0,\ldots,L$  we define the events
\begin{align*}
  X_l			    &:=\{M_l	  \in \I_{\underline{M_l}}	\},				
  \Sigma_{\epsilon,l,\text{lower}} :=\Big\{ \frac{1}{\epsilon^2}\sigma_l \in \I_{\frac{1}{\epsilon^2}\underline{\sigma_l}} \Big\}, 
  \Sigma_{l,\text{lower}} :=\{\sigma_l \in \I_{\underline{\sigma_l}} \},	   \\ 
  W_{l,\text{lower}}	  &:=\{\sqrt{w_l}      \in \I_{\underline{\sqrt{w_l}}}	    \},  
  W_{l,\text{upper}}	  :=\{\sqrt{w_l}      \in \I_{\overline{\sqrt{w_l}}}\}.
\end{align*}
It then follows that $Y_l \subseteq X_l$, with
$
  Y_l :=
  \bigcap \limits_{k=0}^{L} \Big(  \Sigma_{k,\text{lower}}
  \cap
   W_{k,\text{lower}}
  \cap
  W_{l,\text{upper}}
  \cap
   \Sigma_{\epsilon,l,\text{lower}} \Big),
$
for all $l=0,\ldots,L$.  Using elementary probability estimates and De Morgan's rule we estimate
\begin{align} \label{ineq:probEst}
  \P(X_l) \geq \P(Y_l) =& 1- \P(Y_l^c) \notag
  \\  \geq& 1- \sum \limits_{k=0}^L \Big( (\P(\Sigma_{k,\text{lower}}^c) +
      \P(W_{k,\text{lower}}^c ) + \P(W_{l,\text{upper}}^c) +
       \P(\Sigma_{\epsilon,l,\text{lower}}^c ) \Big).
\end{align}
We construct the confidence intervals
$\I_{\underline{\sigma_k}}= [\underline{\sigma_k},\infty)$,
$\I_{\underline{\sqrt{w_k}}}=[\underline{\sqrt{w_k}},\infty)$
and
$\I_{\overline{\sqrt{w_l}}}=(-\infty,\overline{\sqrt{w_l}}]$
such that
$$  \P(\Sigma_{\epsilon,l,\text{lower}} )=\P (\Sigma_{k,\text{lower}})= \P (W_{k,\text{lower}})=\P (W_{l,\text{upper}})=
  1- \frac{\alpha}{4L}.$$
This choice yields $\P(X_l) \geq 1-\alpha,$ for all $l=0,\ldots,L$.
Whereas the lower confidence bound $\underline{M_l}$ helps in saving computing time,
it increases the number of queuing operations.
Therefore, in order to balance the two competing issues of either reducing queuing time or
saving computing time we introduce the parameter $\zeta \in [0,1]$ 
and let 
\begin{align} \label{eq:choiceMl}
\tilde{M}_l := \zeta \underline{M_l} + (1-\zeta) \hat{M}_l.
\end{align}
By setting $\zeta=0$ we pursue the aim of reducing queuing time and
by setting $\zeta=1$ we try to save computing time.
Choosing $\zeta \in (0,1)$ corresponds to finding a strategy in between.
We choose $\zeta$
adaptively for each iteration of 
\algoref{algo:hpMLMC} and for each level $l=0,\ldots,L$. Hence, for each level $l=0,\ldots,L,$ 
$\zeta= \zeta_l^{\mathrm{iter}_l}$ has its own iteration counter $\mathrm{iter}_l$. We choose the following strategy which tries
to realize a trade-off between reducing queuing time and saving computing time:
\begin{align} \label{def:caseszeta}
\zeta_l^{\mathrm{iter}_l}= 
\begin{cases}
1, \quad &\text{ for } \mathrm{iter}_l=1, \\
0.5, \quad &\text{ for } \mathrm{iter}_l= 2 , \\
0, \quad   & \text{ for } \mathrm{iter}_l> 2 .\\
\end{cases}
\end{align}

\begin{algorithm}
  \caption{} 
  \begin{algorithmic}[1]
    {\small
    \STATE Fix a tolerance $\epsilon>0$, set $L=2$ and set $\cL :=\{ 0,\ldots,L\}$ 
    \STATE Compute $K_l$ (warm-up) samples on $l=0,\ldots,L$, set $M_{\mathrm{tot}_l}:=K_l$
    and $\mathrm{iter}_l=1$
    \WHILE{$\cL \neq \emptyset$}
    \FOR{$l\in \cL$}
    \STATE  Estimate $\hat{w}_l$ by \eqref{def:comptimeEstimator}, $\hat{\sigma}_l^2$ by \eqref{def:sigmaEstimator} and
     then $\hat{M}_l$ by \eqref{eq:Ml}
    \STATE  Estimate  $\underline{w_l}$, $\overline{w_l}$ 
    by \eqref{eq:confW}, $\underline{\sigma_l^2}$ 
    by \eqref{eq:confSigma} and then $\underline{M_l}$ by \eqref{eq:Mloptmin}
    \IF{$M_{\mathrm{tot}_l}<\hat{M}_l$}
    \STATE{Set $\zeta_l^{\mathrm{iter}_l}$ according to \eqref{def:caseszeta} and
    compute $\tilde{M_l}$ by \eqref{eq:choiceMl} } 
    \STATE{Add $\big\lceil\tilde{M_l}- M_{\mathrm{tot}_l}\big\rceil$ new samples 
    of $U_l^i- U_{l-1}^i$ }
    \STATE{Set  $M_{\mathrm{tot}_l}:=M_{\mathrm{tot}_l} + 
    \big\lceil\tilde{M_l}- M_{\mathrm{tot}_l}\big\rceil$} 
    \STATE{Set $\mathrm{iter}_l:=\mathrm{iter}_l+1$ }
    \ELSE{}
    \STATE{Set $\mathrm{iter}_l:=\mathrm{iter}_l+1$ }
    \STATE{Skip level $l$}
    \ENDIF
    \ENDFOR
    \IF{ the statistical tolerance \eqref{def:mininmization} is satisfied }
      \IF{  the bias term satisfies \eqref{eq:addLevel} }
        \STATE{Set $\cL:=\emptyset$}
      \ELSE{}
        \STATE{Set $\cL:=\cL \cup \{L+1\}$}
        \STATE{Compute $K_{L+1}$ (warm-up) samples and set $M_{\mathrm{tot}_{L+1}}:=K_{L+1}$}
        \STATE Set $\mathrm{iter}_{L+1}=1$
        \STATE Set $L:=L+1$
      \ENDIF   
    \ENDIF 
    \ENDWHILE  
    \STATE Compute $\EMLMC{L}{U_L}$  }
  \end{algorithmic} 
  \label{algo:hpMLMCConfInterval}
\end{algorithm} 

In the  first iteration we rely on the lower confidence bound $\underline{M_l}$ from 
\eqref{eq:Mloptmin} to avoid overestimating $M_l$. In the second iteration we start to approach 
$\hat{M}_l$  but still consider $\underline{M_l}$ as a safe-guard for overestimating $M_l$. After the second iteration, where $M_{\mathrm{tot}_l}$ is sufficiently 
large, we trust the estimate $\hat{M}_l$. It might happen that during the first two iterations
we have
$\underline{M_l}\leq M_{\mathrm{tot}_l} < \hat{M}_l$. In that case we set $\zeta_l^{\text{iter}_l}
=0$.

The modified $hp$-MLMC method which adds new levels adaptively based on the bias estimate \eqref{eq:addLevel},
is summarized in \algoref{algo:hpMLMCConfInterval}. For the number of warm-up samples $K_l$ 
we typically choose ten to one hundred samples on the coarse levels and three samples on the fine levels. When we add a new level we set the number of warm-up samples also equal to three.
\renewcommand{\thealgorithm}{$hp$-\normalfont{MLMC} with confidence intervals}

%%%%%%%%%%%%%%%%%%%%%%%%%%%%%%%%%%%%%%%%%%%%%%%%%%%%%%%%%%%%%%%%%%%%%%%%%%%%%%%%%%%%%%%%%%%%%%%%%%%%%%%%%%%%%%%%%%%%%%%%%%%%%%%%%%%%%%%%%%%%%%%%%%%%%%%%%%%%%%%
%SECTION

%%%%%%%%%%%%%%%%%%%%%%%%%%%%%%%%%%%%%%%%%%%%%%%%%%%%%%%%%%%%%%%%%%%%%%%%%%%%%%%%%%%%%%%%%%%%%%%%%%%%%%%%%%%%%%%%%%%%%%%%%%%%%%%%%%%%%%%%%%%%%%%%%%%%%%%%%%%%%%
\section{Numerical Experiments} \label{sec:numerics}
We present numerical results for the $hp$-MLMC method as introduced in \algoref{algo:hpMLMCConfInterval}.
In \secref{exp:smooth}, we apply the
$h$-, $p$-, $hp$-MLMC method to a smooth benchmark problem to 
verify \thmref{thm:complexity}.
In \secref{exp:cavity}, we apply the $h$-, $p$- and  $hp$-MLMC method to an open cavity flow problem,
an important flow problem from computational acoustics.
We give a detailed comparison of all methods 
and verify that for both problems, $h$-, $p$- and $hp$-MLMC yield 
an optimal asymptotic work. 
This shows that all three methods 
under consideration are applicable for UQ of complex
engineering problems in computational fluid dynamics.
The computations for the second numerical experiment
were performed on Cray XC40 at the High-Performance Computing
Center Stuttgart. The numerical solver relies 
on the Discontinuous Galerkin Spectral Element solver FLEXI \cite{HindenlangGassner2012}.
The time-stepping uses a Runge--Kutta method of order four.
%%%%%%%%%%%%%%%%%%%%%%%%%%%%%%%%%%%%%%%%%%%%%%%%%%%%%%%%%%%%%%%%%%%%%%%%%%%%%%%%%%%%%%%%%%%%%%%%%%%%%%%%%%%%%%%%%%%%%%%%%%%%%%%%%%%%%%%%%%%%%%%%%%%%%%%%%%%%%%
%SUBSECTION
%
%%%%%%%%%%%%%%%%%%%%%%%%%%%%%%%%%%%%%%%%%%%%%%%%%%%%%%%%%%%%%%%%%%%%%%%%%%%%%%%%%%%%%%%%%%%%%%%%%%%%%%%%%%%%%%%%%%%%%%%%%%%%%%%%%%%%%%%%%%%%%%%%%%%%%%%%%%%%%%

\subsection{Smooth Benchmark Solution} \label{exp:smooth}
In this numerical example we verify \thmref{thm:complexity} by means of 
a smooth manufactured solution, given by 
\begin{align} \label{eq:smoothnse}
 \begin{pmatrix} 
 \rho(t,x_1,x_2,y) \\ (\rho v_1)(t,x_1,x_2,y)\\(\rho v_2)(t,x_1,x_2,y)\\E(t,x_1,x_2,y))
 \end{pmatrix}=
 \begin{pmatrix} 
  2 + A \sin(4 \pi (( x_1+x_2)- ft )) \\
  2 + A \sin(4 \pi (( x_1+x_2)- f t )) \\
  2 + A \sin(4 \pi (( x_1+x_2)- f t )) \\
  \big(2 + A \sin(4 \pi (( x_1+x_2)- f t )) \big)^2
 \end{pmatrix}.
\end{align}
The benchmark solution \eqref{eq:smoothnse} 
is obtained by introducing an additional source term in \eqref{eq:NSE}.
We choose the amplitude  and frequency of \eqref{eq:smoothnse} to be uncertain, i.e.~we let $A \sim \mathcal{U}(0.1,0.9)$
and $\ensuremath{f} \sim \mathcal{N}(1,0.05^2)$.
The spatial domain is $D = (-1,1)^2$ and we consider periodic boundary conditions.
The setup of the mesh hierarchies for $h$-MLMC, $p$-MLMC and $hp$-MLMC can be found in
\tabref{tab:smoothSetup}.
The final computational time for this example
is $T=1$ and the QoI is the  momentum in $x_1$-direction at final time $T$, i.e.
$Q(U)= (\rho v_1)(T,\cdot,\cdot,\cdot).$ For the confidence intervals from \secref{sec:confIntervals} we set
$\alpha$ to be 0.025.
\begin{table}
  \centering
  \begin{tabular}{cccccccccc} %
    \toprule
    \multirow{3}{*}{
      \parbox[c]{.2\linewidth}{\centering level}}
    &
      \multicolumn{2}{c}{$h$-MLMC} &&
      \multicolumn{2}{c}{$p$-MLMC} &&
      \multicolumn{2}{c}{$hp$-MLMC}
    \\
    \cmidrule{2-3} \cmidrule{5-6} \cmidrule{8-9}

    & {\centering $N_l$} & {$q_l$} && {$N_l$} & {$q_l$} && {$N_l$} & {$q_l$} \\
    \midrule
    0 &   16    & 5  && 256   & 3  &&  16    & 3  \\
    1 &   64    & 5  && 256   & 4  &&  64    & 4 \\
    2 &   256   & 5  && 256   & 5  &&  256   & 5  \\
    3 &   1024  & 5  && 256   & 6  &&  1024  & 6  \\
    \bottomrule
  \end{tabular}
  \caption{Level setup for $h$-, $p$- and $hp$-MLMC. Example \ref{exp:smooth}.}
  \label{tab:smoothSetup}
\end{table}

In \tikzfigref{fig:smooth_plots_a} we plot the estimated bias, i.e. the quantity 
$\|\E{U_l}-\E{U_{l-1}}\|_{L^2(D)}$ from \rmref{rem:noLevels}, where
the quantities $\E{U_l}$ and $\E{U_{l-1}}$ have been estimated by the standard MC estimator
\eqref{eq:numExp}.
In \tikzfigref{fig:smooth_plots_b} we plot the bias term 
vs. $h_l^{\tilde{q}_l}$ in a log-log plot. This allows us to estimate $\kappa_1$ from \assref{ass:Biasreduction} using a  linear fit over all data points.
For $h$-MLMC, where we consider a DG polynomial degree of five,
we estimate $\kappa_1 \approx 0.86$ 
and this is in accordance with the 
expected value of one, cf. \rmref{rem:assumptions} 2. On the other hand, the values of $\kappa_1$ for
$p$- and $hp$-MLMC are clearly smaller than one.
\tikzfigref{fig:smooth_plots_c} shows the estimated level variances $\hat{\sigma}^2_l$ 
and its 95\% confidence interval. To verify the assumptions of 
\thmref{thm:complexity}  
we estimate $\kappa_2$ from \assref{ass:Varreduction} in \tikzfigref{fig:smooth_plots_d} by linearly fitting the last three data points.
For $h$-MLMC we estimate $\kappa_2 \approx 1.71$. 
According to \rmref{rem:assumptions} 3. we 
expect $\kappa_2 \approx 2$. Again,  
$p$- and $hp$-MLMC yield  distinctively smaller values than two. However, all three methods
satisfy $ 2\kappa_1 \geq \kappa_2$ from \assref{ass:Biasreduction}.
Next, we check \assref{ass:BoundedCost} for all three methods under consideration, where
we estimate the average work using the sample mean
from \eqref{def:comptimeEstimator}. Since we expect $\gamma_1=3$ 
and because DOF$_l$:=$(h_l^{-1}\tilde{q}_l)^2$, the average work should scale as 
$\hat{w}_l= \mathcal{O}(\text{DOF}_l^{3/2})$ (cf. \rmref{rem:assumptions} 1). This  can be
observed  in  \tikzfigref{fig:smooth_work}.
Combining the estimate of $\kappa_2$ from \figref{fig:smooth_plots_d}
and the fact that $\gamma_1\approx 3$,
we compute that $h$-, $p$- and $hp$-MLMC 
satisfy $\kappa_2 \tilde{q}_0> \gamma_1$ (see the parameters of the coarsest level given in 
\tabref{tab:smoothSetup}). Therefore,
according to \thmref{thm:complexity}
all three methods should yield an 
optimal asymptotic work of $\mathcal{O}(\epsilon^{-2})$, which can be observed in
\tikzfigref{fig:smooth_plots_e}. 
It appears that for this example
 $h$-MLMC is more efficient than $p$- and $hp$-MLMC. This is probably
due to the very good variance reduction 
across all levels and a similar average work 
compared to $hp$-MLMC.

Finally, we check whether the prescribed tolerance $\epsilon$ is
reached by all three methods. To this end we evaluate
the statistical error $\epsilon_{\mathrm{stat}}$ by \eqref{def:sigmaEstimator} and compute 
the deterministic approximation error $\epsilon_{\mathrm{det}}$
using the exact solution \eqref{eq:smoothnse}.
Both terms $\epsilon_{\mathrm{det}}$ and  $\epsilon_{\mathrm{stat}}$
are then summed up as in \eqref{eq:splittingMSE} and plotted in
\tikzfigref{fig:smooth_plots_f}. 
\begin{figure}
  \centering
  \begin{tikzpicture}

\begin{groupplot}[group style={group size=2 by 3,horizontal sep = 1.8cm,  vertical sep = 2cm},
	width=1.\figurewidth,
	height=1.\figureheight,
	scale only axis,
	]

\nextgroupplot[
title = \tikztitle{Estimated bias},
xmode=linear,
xlabel={level},
xtick={0,1,2,3},
xminorticks=true,
ymin=1E-06,
ymode=log,
ymax=1,
ylabel={\scriptsize $\|\E{U_l}-\E{U_{l-1}}\|_{L^2(D)}$},
yminorticks=true,
cycle list name=color,
legend style={font=\scriptsize},
]

\addplot [mark=o, mark options={solid, black}]
  table[row sep=crcr]{%
1		0.11920668E-01 \\
2		0.25624871E-03 \\
3		0.91999214E-05 \\
};
\addlegendentry{$h$}

\addplot [mark=triangle, mark options={solid, black}]
  table[row sep=crcr]{%
1	5.69E-04 \\
2	3.53E-05 \\
3	1.38E-05 \\
};
\addlegendentry{$p$}

\addplot  [mark=diamond*,color=black, mark options={solid, black}]
  table[row sep=crcr]{%
1 0.05949251 \\
2 0.0014448658 \\ 
3 9.992908E-06 \\
};
\addlegendentry{$hp$}

\nextgroupplot[%
title = \tikztitle{Estimated $\kappa_1$},
xminorticks=true,
ymode=log,
xmode=log,
xlabel=$h_l^{\tilde{q}_l}$,
ymin=1E-06,
ymax=1,
axis background/.style={fill=white},
legend style={legend pos= north west,legend cell align=left, align=left, draw=white!15!black, font=\scriptsize},
]
\addplot [black,mark=o]
  table[row sep=crcr]{%
3.814697265625E-06	0.011920668 \\
5.96046447753906E-08	0.00025624871 \\
9.31322574615479E-10	9.1999214E-06 \\
};
\addlegendentry{$\kappa_1 = 0.86$}

\addplot [black,mark=triangle]
  table[row sep=crcr]{%
9.5367431640625E-07	0.00056922153 \\
5.96046447753906E-08	3.5345518E-05 \\
3.72529029846191E-09	1.3787933E-05 \\
};
\addlegendentry{$\kappa_1 = 0.67$}

\addplot [mark=diamond*,color=black]
  table[row sep=crcr]{%
3.0517578125E-05	0.05949251 \\
5.96046447753906E-08	0.0014448658 \\
2.91038304567337E-11	9.992908E-06 \\
};
\addlegendentry{$\kappa_1 = 0.63$}

\nextgroupplot[
title = \tikztitle{Estimated variance},
xlabel={level},
ylabel={$\hat{\sigma}_l^2$},
ymin=1e-13,
ymax=1,
ymode=log,
xmode=linear,
xtick={0,1,2,3},
yminorticks=true,
tick pos=both,
legend style={legend pos= north east,legend cell align=left, align=left, draw=white!15!black, font=\scriptsize},
cycle list name=black white,
]

\addplot[mark=o,solid,mark options={solid,scale=0.5},error bars/.cd, y dir=both, y explicit,error bar style={solid}] 
plot coordinates {
(0,0.0018775694) -= (0,8.07204536000007E-06) += (0,8.14253968999998E-06)
(1,9.7162828E-06) -= (0,2.45925588649999E-07) += (0,2.5913306927E-07)
(2,1.3805651E-08) -= (0,1.6841949167E-09) += (0,2.2320321499E-09)
(3,6.4798512E-12) -= (0,1.06701111089E-12) += (0,1.59490754882E-12)        
};
\addlegendentry{$h$}

\addplot[mark=triangle,black,mark options={solid,scale=0.5},dashed,error bars/.cd, y dir=both, y explicit,error bar style={solid}] plot
 coordinates {
(0,0.0018685385) -= (0,8.87265814000001E-06) += (0,8.95831947999985E-06)
(1,1.7446602E-08) -= (0,2.737826161E-09) += (0,3.9997176739E-09)
(2, 1.7738867E-10) -= (0,2.1640203443E-11) += (0,2.8679358509E-11)
(3,1.6506302E-11) -= (0,2.7180265549E-12) += (0,4.0627515741E-12)
};
\addlegendentry{$p$}

\addplot[mark=diamond*,black,mark options={solid,scale=0.5},dash dot,error bars/.cd, y dir=both, y explicit,error bar style={solid}] plot
  coordinates {
        (0,0.001832402) -= (0,1.01972485100001E-05) += (0,1.03128187599999E-05)
        (1,0.00057969744) -= (0,4.01109370399996E-06) += (0,4.12460232700001E-06)
        (2,1.3758563E-07) -= (0,1.6784505031E-08) += (0,8.9791507E-12)
        (3,8.9791507E-12) -= (0,1.47856073659E-12) += (0,2.2100685327E-12)        
};
\addlegendentry{$hp$}

\nextgroupplot[%
title = \tikztitle{Estimated $\kappa_2$},
xminorticks=true,
ymode=log,
xmode=log,
xlabel=$h_l^{\tilde{q}_l}$,
ymin=1E-13,
ymax=1.,
axis background/.style={fill=white},
legend style={legend pos= north west,legend cell align=left, align=left, draw=white!15!black, font=\scriptsize},
]

\addplot [black,mark=o]
  table[row sep=crcr]{
0.000244140625 0.0018775694 \\  
3.814697265625E-06	9.7162828E-06 \\
5.96046447753906E-08	1.3805651E-08 \\
9.31322574615479E-10	6.4798512E-12 \\
};
\addlegendentry{$\kappa_2 = 1.71$}

\addplot [black,mark=triangle]
  table[row sep=crcr]{%
1.52587890625E-05	0.0018685385 \\
9.5367431640625E-07	1.7446602E-08 \\
5.96046447753906E-08	1.7738867E-10 \\
3.72529029846191E-09	1.6506302E-11 \\
};
\addlegendentry{$\kappa_2 = 1.26$}

\addplot [mark=diamond*,color=black]
  table[row sep=crcr]{%
0.00390625	0.001832402 \\
3.0517578125E-05	0.00029350754 \\
5.96046447753906E-08	1.3758563E-07 \\
2.91038304567337E-11	8.9791507E-12 \\
};
\addlegendentry{$\kappa_2 = 1.25$}

\nextgroupplot[
title = \tikztitle{Asymptotic runtime},
xmode=log,
xmin=0.00008,
xmax=0.02,
xminorticks=true,
xlabel style={font=\color{white!15!black}},
xlabel={$\epsilon$},
ymode=log,
ymin=0.0001,
ymax=0.1,
yminorticks=true,
ylabel style={font=\color{white!15!black}},
ylabel={$\epsilon^2\text{W}_{\text{tot}}$},
axis background/.style={fill=white},
legend style={legend pos= north west,legend cell align=left, align=left, draw=white!15!black, font=\scriptsize},
legend columns=3,
cycle list name=black white,
]

\addplot [color=black, mark=o, mark options={solid, black}]
  table[row sep=crcr]{%
0.01	0.003749 \\
0.005	0.00133075 \\
0.001	0.00052322 \\
0.0005	0.0005016125 \\
0.0003	0.0005263902 \\
0.0001  0.000501929600007\\
};
\addlegendentry{$h$}

\addplot [color=black, mark=triangle, mark options={solid, black}]
  table[row sep=crcr]{%
0.01	0.010286 \\
0.005	0.00622275 \\
0.001	0.00440896 \\
0.0005	0.004268005 \\
0.0003	0.0042759279 \\
0.0001  0.00417191779998 \\
};
\addlegendentry{$p$}

\addplot [color=black, mark=diamond*, mark options={solid, black}]
  table[row sep=crcr]{%
0.01	0.005496 \\
0.005	0.00137675 \\
0.001	0.00128922 \\
0.0005	0.001163042500003 \\
0.0003	0.001004308200009 \\
0.0001  0.000978897099999 \\
\\
};
\addlegendentry{$hp$}

\nextgroupplot[
title = \tikztitle{Estimated tolerance},
xmode=log,
xmin=0.00008,
xmax=0.02,
xminorticks=true,
xlabel style={font=\color{white!15!black}},
xlabel={$\epsilon$},
ymode=log,
ymin=0.00005,
ymax=0.1,
yminorticks=true,
ylabel style={font=\color{white!15!black}},
ylabel={tolerance},
axis background/.style={fill=white},
legend style={legend pos= north west,legend cell align=left, align=left, draw=white!15!black, font=\scriptsize},
legend columns= 2,
]

\addplot [color=black, mark=o, mark options={solid, black}]
  table[row sep=crcr]{%
0.01	0.00466357587298 \\
0.005	0.00247093926596 \\
0.001	0.000513889150111 \\
0.0005	0.000268780811483 \\
0.0003	0.000234489305265 \\
0.0001  7.97886465249602E-05 \\
};

\addlegendentry{$h$ }
\addplot [color=black, mark=triangle, mark options={solid, black}]
  table[row sep=crcr]{%
0.01	0.004822974973051 \\
0.005	0.002463957258151 \\
0.001	0.000516592017427 \\
0.0005	0.00026845687078 \\
0.0003	0.000179041309526 \\
0.0001  7.98345456993602E-05 \\
};
\addlegendentry{$p$}

\addplot [color=black, mark=diamond*, mark options={solid, black}]
  table[row sep=crcr]{%
0.01	0.00492971607914 \\
0.005	0.00248923357138 \\
0.001	0.000547987283766 \\
0.0005	0.000298305138301 \\
0.0003	0.000184720978039 \\
0.0001  8.75818458347825E-05 \\
};
\addlegendentry{$hp$ }

\addplot [color=black, dashed,thick]
  table[row sep=crcr]{%
0.01	0.01 \\
0.005	0.005 \\
0.001	0.001\\
0.0005	0.0005\\
0.0003	0.0003\\
0.0001  0.0001\\
};
\addlegendentry{tolerance}
\end{groupplot}

\end{tikzpicture}
  \settikzlabel{fig:smooth_plots_a} \settikzlabel{fig:smooth_plots_b} \settikzlabel{fig:smooth_plots_c}    
  \settikzlabel{fig:smooth_plots_d} \settikzlabel{fig:smooth_plots_e} \settikzlabel{fig:smooth_plots_f}
  \caption{Estimated bias, variance, tolerance and asymptotic runtime. For $\hat{\sigma}_l^2$ we also plot the 95\% confidence
  interval. Example \ref{exp:smooth}.}
  \label{fig:smooth_runtime}
\end{figure}
The computed number of samples on each level for different tolerances is shown 
in \figref{fig:smooth_samples}. 
As expected, we only need a few computations on the fine levels, the majority of the computations is performed on coarse levels.

\figref{fig:ldc_mloptmin} illustrates the advantage of computing the lower confidence bound 
$\underline{M_l}$ from \eqref{eq:Mloptmin}. In this figure we plot the values of
$\underline{M_l}$ from \eqref{eq:Mloptmin}, of $\tilde{M}_l$ from 
\eqref{eq:choiceMl} and of $\hat{M}_l$ from \eqref{eq:Ml},
for each level $l=0,\ldots, 3$, for the first three 
iterations of \algoref{algo:hpMLMCConfInterval}.
For this example we use $hp$-MLMC.
In \tikzfigref{fig:mloptmin_a} we observe that relying on the estimate of $\hat{M}_0$
would have led to an overestimate of the optimal number of samples by more than 8000 samples.
The same holds for level two (\tikzfigref{fig:mloptmin_b}) where we would have overestimated 
the optimal number of samples  by more than 400.
On the other hand, our proposed strategy ensures that we reach the
standard estimator $\hat{M}_l$ in (at most) three iterations.
This shows in particular the advantage of our approach 
because we avoid computing unnecessary samples, hence we save computing time,
while trying to keep the number of 
queuing operations low.

\begin{figure}
  \centering
  \begin{tikzpicture}

\begin{groupplot}[group style={group size=2 by 3,horizontal sep = 2cm,  vertical sep = 1.8cm},
	width=\figurewidth,
	height=\figureheight,
	scale only axis,
	]
\nextgroupplot[
title = \tikztitle{No. of samples, $h$-MLMC},
xlabel={level},
ymode=log,
ymin=1,
ymax=2000000,
ylabel={no. samples},
xtick={0, 1, 2,3},
xmin=-0.5,
xmax = 3.5,
yminorticks=true,
axis background/.style={fill=white},
legend style={at={(1.9,-0.5)}, anchor=south east, legend cell align=left, align=left, draw=white!15!black, font=\scriptsize},
legend columns=3,
cycle list name=black white,
]

\addplot 
  table[row sep=crcr]{%
0 85 \\
1 6 \\
2 3 \\
};
\addlegendentry{$\epsilon=\expnumber{1}{-2}$}

\addplot 
  table[row sep=crcr]{%
0 350 \\
1 9\\
2 3 \\
};
\addlegendentry{$\epsilon=\expnumber{5}{-3}$}

\addplot 
  table[row sep=crcr]{%
0 9453 \\
1 261 \\
2 3 \\
};
\addlegendentry{$\epsilon=\expnumber{1}{-3}$}

\addplot 
  table[row sep=crcr]{
0 36894 \\
1 929 \\
2 9 \\
};
\addlegendentry{$\epsilon=\expnumber{5}{-4}$}

\addplot 
  table[row sep=crcr]{%
0 103021 \\
1 2848 \\
2 22 \\
3 3 \\
};
\addlegendentry{$\epsilon=\expnumber{3}{-4}$}

\addplot 
  table[row sep=crcr]{%
0 827511 \\
1 22851 \\
2 172 \\
3 3 \\
};
\addlegendentry{$\epsilon=\expnumber{1}{-4}$}

\legend{}

\nextgroupplot[
title = \tikztitle{No. of samples, $p$-MLMC},
xlabel={level},
ylabel={no. samples},
ymode=log,
ymin=1,
ymax=2000000,
xtick={0, 1, 2, 3},
xmin=-0.5,
xmax = 3.5,
yminorticks=true,
axis background/.style={fill=white},
legend columns=3,
cycle list name=black white,
]

\addplot 
  table[row sep=crcr]{%
0 71 \\
1 3 \\
2 3 \\
};
\addlegendentry{$\epsilon=\expnumber{1}{-2}$}

\addplot 
  table[row sep=crcr]{%
0	340 \\
1	3 \\
2	3 \\
};
\addlegendentry{$\epsilon=\expnumber{5}{-3}$}

\addplot 
  table[row sep=crcr]{%
0 7784 \\
1 6 \\
2 3 \\
};
\addlegendentry{$\epsilon=\expnumber{1}{-3}$}

\addplot 
  table[row sep=crcr]{%
0 30373 \\
1 19 \\
2 3 \\
};
\addlegendentry{$\epsilon=\expnumber{5}{-4}$}

\addplot 
  table[row sep=crcr]{%
0 84373 \\
1 56 \\
2 9 \\
3 3\\
};
\addlegendentry{$\epsilon=\expnumber{3}{-4}$}

\addplot 
  table[row sep=crcr]{%
0 676889 \\
1 441 \\
2 55 \\
3 5\\
};
\addlegendentry{$\epsilon=\expnumber{1}{-4}$}

\legend{}

\nextgroupplot[
title = \tikztitle{No. of samples, $hp$-MLMC},
xlabel={level},
ymode=log,
ymin=1,
ymax=2000000,
ylabel={no. samples},
xtick={0, 1, 2, 3},
xmin=-0.5,
xmax = 3.5,
yminorticks=true,
axis background/.style={fill=white},
legend style={at={(1.04,-1)}, anchor=south east, legend cell align=left, align=left, draw=white!15!black, font=\small},
legend columns=2,
cycle list name=black white,
]

\addplot 
  table[row sep=crcr]{%
0 112 \\
1 35 \\
2 3 \\
};
\addlegendentry{$\epsilon=\expnumber{1}{-2}$}

\addplot 
  table[row sep=crcr]{%
0 557 \\
1 116 \\
2 3 \\
};
\addlegendentry{$\epsilon=\expnumber{5}{-3}$}

\addplot 
  table[row sep=crcr]{%
0 13078 \\
1 2955 \\
2 24 \\
};
\addlegendentry{$\epsilon=\expnumber{1}{-3}$}

\addplot 
  table[row sep=crcr]{%
0 61329 \\
1 10004 \\
2 82 \\
};
\addlegendentry{$\epsilon=\expnumber{5}{-4}$}

\addplot 
  table[row sep=crcr]{%
0 173399 \\
1 28611 \\
2 202 \\
3 3 \\
};
\addlegendentry{$\epsilon=\expnumber{3}{-4}$}

\addplot 
  table[row sep=crcr]{%
0 1606176 \\
1 236639 \\
2 2982 \\
3 4\\
};
\addlegendentry{$\epsilon=\expnumber{1}{-4}$}

\nextgroupplot[
title = \tikztitle{average work},
scale only axis,
xmode=log,
xminorticks=true,
ymode=log,
xlabel={DOF},
ylabel={work},
ymax=500,
yminorticks=true,
axis background/.style={fill=white},
legend style={at={(1.04,-1)}, anchor=south east, legend cell align=left, align=left, draw=white!15!black, font=\small},
legend columns=2,
]
\addplot [black,solid,mark=triangle] 
  table[row sep=crcr]{%
576 0.0295285714286 \\
2304 0.175033333333\\
9216 1.3654\\
36864 18.9262 \\
};
\addlegendentry{$h$-MLMC}

\addplot  [black,solid,mark=asterisk,] 
  table[row sep=crcr]{%
4096  0.389514285714 \\
6400  0.740133333333 \\
9216  1.36053333333\\
12544 3.023112\\
};
\addlegendentry{$p$-MLMC}

\addplot [black,solid,mark=diamond*] 
  table[row sep=crcr]{%
256 0.01\\
1600 0.075 \\
9216 1.35933333333\\
50176 33.976 \\
};
\addlegendentry{$hp$-MLMC}

\addplot[black,dashed] 
  table[row sep=crcr]{%
256 0.04096\\
50176 112.39424\\
};
\addlegendentry{$\mathcal{O}(\text{DOF}^{3/2})$}

\end{groupplot}

\end{tikzpicture}%
  \settikzlabel{fig:smooth_samples_a} 
  \settikzlabel{fig:smooth_samples_b}
  \settikzlabel{fig:smooth_samples_c} \settikzlabel{fig:smooth_work}
  \caption{Computed number of samples and  average work on each level. Example \ref{exp:smooth}.}
  \label{fig:smooth_samples}
\end{figure}
\begin{figure}
 \centering
 \begin{tikzpicture}
\begin{groupplot}[group style={group size=2 by 2,horizontal sep = 2cm,  vertical sep = 1.8cm},
	width=1.\figurewidth,
	height=\figureheight,
	scale only axis,
	]
	
\nextgroupplot[
title = \tikztitle{Level 0},
bar width=0.3,
xmin=0.5,
xmax=5.5,
xtick={1, 2, 3},
xtick style={draw=none},
xlabel style={font=\color{white!15!black}},
xlabel={iteration},
ymin=0,
ymax=80000,
ylabel style={font=\color{white!15!black}},
ylabel={\scriptsize estimated no. samples},
axis background/.style={fill=white},
legend style={at={(0.95,0.5)},anchor=east,legend cell align=left, align=left, draw=white!15!black},
legend columns=1,
]

\addplot[ybar, draw=black, area legend] table[row sep=crcr] {%
1	36380 \\
2	57492\\
3	52275 \\
};

\addlegendentry{$\underline{M}_0$}

\addplot[ybar, ,thick, draw=red, area legend] table[row sep=crcr] {%
1	36380 \\
2	61330\\
3	63314\\
};

\addlegendentry{$\tilde{M}_0$};

\addplot[ybar ,dashed, draw=black, area legend] table[row sep=crcr] {%
1	71971\\
2	65168\\
3	63314.0\\
};
\addlegendentry{$\hat{M}_0$};
%%%%%%%%%%%%%%%%%%%%%%%%%%%%%%%%%%%%%%%%%%%%%%%%%%%%%%%%%%%%%%%%%%%%%%%%%%%%%%%%%%%%%%%%%%%%%%%%%%%%%%%%%%%%%%%%%
\nextgroupplot[
title = \tikztitle{Level 1},
bar width=0.3,
xmin=0.5,
xmax=5.5,
xtick={1, 2, 3},
xtick style={draw=none},
xlabel style={font=\color{white!15!black}},
xlabel={iteration},
ymin=0,
ymax=11000,
ylabel={\scriptsize estimated no. samples},
ylabel style={font=\color{white!15!black}},
axis background/.style={fill=white},
legend style={at={(0.95,0.5)},anchor=east,legend cell align=left, align=left, draw=white!15!black},
legend columns=1,
]

\addplot[ybar,  , draw=black, area legend] table[row sep=crcr] {%
1	2998 \\
2	8152\\
3	8378\\
};

\addlegendentry{$\underline{M}_1$}

\addplot[ybar, , thick, draw=red, area legend] table[row sep=crcr] {%
1	2999\\
2	8619\\
3	9330\\
};

\addlegendentry{$\tilde{M}_1$};

\addplot[ybar ,dashed, draw=black, area legend] table[row sep=crcr] {%
1	9787\\
2	9085\\
3	9330\\
};
\addlegendentry{$\hat{M}_1$};

%%%%%%%%%%%%%%%%%%%%%%%%%%%%%%%%%%%%%%%%%%%%%%%%%%%%%%%%%%%%%%%%%%%%%%%%%%%%%%%%%%%%%%%%%%%%%%%%%%%%%%%%%%%%%%%%%
\nextgroupplot[
title = \tikztitle{Level 2},
bar width=0.3,
xmin=0.5,
xmax=5.5,
xtick={1, 2, 3},
xtick style={draw=none},
xlabel style={font=\color{white!15!black}},
xlabel={iteration},
ymin=0,
ymax=200,
ylabel style={font=\color{white!15!black}},
ylabel={\scriptsize estimated no. samples},
axis background/.style={fill=white},
legend style={at={(0.95,0.5)},anchor=east,legend cell align=left, align=left, draw=white!15!black},
legend columns=1,
]
\addplot[ybar,  draw=black, area legend] table[row sep=crcr] {%
1	16\\
2	75\\
3	116\\
};
\addlegendentry{$\underline{M}_2$}
\addplot[ybar, thick, draw=red, area legend] table[row sep=crcr] {%
1	16\\
2	105\\
3	156\\
};
\addlegendentry{$\tilde{M}_2$}
\addplot[ybar , dashed,draw=black, area legend] table[row sep=crcr] {%
1	104\\
2	134\\
3	156\\
};
\addlegendentry{$\hat{M}_2$}

\end{groupplot}
\end{tikzpicture}
 \settikzlabel{fig:mloptmin_a} \settikzlabel{fig:mloptmin_b} 
 \settikzlabel{fig:mloptmin_c}  \settikzlabel{fig:mloptmin_d}
 \caption{Estimated $\underline{M_l}$, $\hat{M}_l$ and chosen number of samples $\tilde{M}_l$ from 
 \eqref{eq:choiceMl} for
 different levels for the $hp$-MLMC method.
 The tolerance in this example is $\epsilon=\expnumber{5}{-4}$. The number of warm-up samples was
 (100, 10, 3)  (Example \ref{exp:smooth}).}
 \label{fig:ldc_mloptmin}
\end{figure}
%%%%%%%%%%%%%%%%%%%%%%%%%%%%%%%%%%%%%%%%%%%%%%%%%%%%%%%%%%%%%%%%%%%%%%%%%%%%%%%%%%%%%%%%%%%%%%%%%%%%%%%%%%%%%%%%%%%%%%%%%%%%%%%%%%%%%%%%%%%%%%%%%%%%%%%%%%%%%%
%SUBSECTION
%
%%%%%%%%%%%%%%%%%%%%%%%%%%%%%%%%%%%%%%%%%%%%%%%%%%%%%%%%%%%%%%%%%%%%%%%%%%%%%%%%%%%%%%%%%%%%%%%%%%%%%%%%%%%%%%%%%%%%%%%%%%%%%%%%%%%%%%%%%%%%%%%%%%%%%%%%%%%%%%
\subsection{Open Cavity}\label{exp:cavity}
In this numerical example we investigate the influence of uncertain input parameters on the aeroacoustic feedback of cavity
flows as in \cite{Kuhn2018}. The prediction of aeroacoustic noise is an important branch of research for example in the
automotive industry.
However, due to the large bandwidth of spatial and temporal scales, a high-order numerical scheme with low dissipation and
dispersion error is necessary to
preserve important small scale information and hence it poses a very challenging numerical problem for UQ.
We consider the flow over a two-dimensional open cavity, cf. \figref{fig:cavity_sketch}, using $h$-, $p$- and $hp$-MLMC.

For this flow problem we consider two uncertain parameters. The first uncertain parameter is
the initial condition for pressure, i.e. we let $p^0 \sim \mathcal{N}(1.8,0.01^2)$  be normally
distributed. With this choice 
the Mach number $\text{Ma}=\frac{v_1}{c}$, with $c=\sqrt{\kappa \frac{p}{\rho}}$ being the speed of sound, becomes uncertain.
The initial condition in primitive variables then reads as
$$\Big(\rho^0, v_1^0,v_2^0,p^0\Big)= \Big(1,1,0,p^0(y)\Big).$$
As a second uncertain parameter, we let the boundary layer thickness in front of the cavity,
$\delta_{99}\sim \mathcal{U}(0.28,0.48)$, be uniformly distributed. 
For the boundary conditions at the inlet we employ
Dirichlet boundary conditions in combination with the
precomputed  Blasius boundary layer profile. 
All wall boundaries are modeled as isothermal no-slip walls where the temperature  $\mathcal{T}$
is computed from the ideal gas law $p=\rho R \mathcal{T}$,
where the gas constant for air satisfies $R=287.058$. 
At the end of the cavity we consider a pressure outflow boundary condition, where the pressure is
specified by the initial pressure. Above the cavity we also consider a
pressure outflow boundary condition and we augment the boundary with a sponge zone (cf. \figref{fig:cavity_sketch})
to avoid that artificial reflections reenter the computational domain. Detailed
information about the sponge zone can be found in \cite{Kuhn2018}.
For our QoI we record
the pressure fluctuations $p(t,x,y)$ on top of the cavity 
at $\bar{x}=(x_1,x_2)=(1.57,0)$ over time and then perform the discrete-time Fourier transform (DTFT)
to obtain the sound pressure spectrum at $\bar{x}$, i.e.
$Q(U)= \text{DTFT}\Big(p(\cdot,\bar{x},y)\Big).$
The corresponding $L^2$-norm is then taken in frequency space.
The mesh hierarchies for $h$-, $p$- and $hp$-MLMC can be found in  \tabref{tab:cavitySetup}.
For the confidence intervals from \secref{sec:confIntervals} we set
$\alpha$ to be 0.025.

\begin{table}
 \centering
 \begin{tabular}{ccccccccc} %
   \toprule
   \multirow{2}{*}{
     \parbox[c]{.2\linewidth}{\centering level}}
   & \multicolumn{2}{c}{$h$-MLMC} &&
   \multicolumn{2}{c}{$p$-MLMC} &&
   \multicolumn{2}{c}{$hp$-MLMC} \\
   \cmidrule{2-3} \cmidrule{5-6}  \cmidrule{8-9}

   & {\centering $N_l$} & {$q_l$} && {$N_l$} & {$q_l$} && {$N_l$} & {$q_l$} \\
   \midrule
   0 & 279  & 7  && 1987 & 4  && 279   & 4 \\
   1 & 423  & 7  && 1987 & 5  && 423   & 5 \\
   2 & 957  & 7  && 1987 & 6  && 957   & 6 \\
   3 & 1987 &7   && 1987 & 7  && 1987  & 7 \\
   \bottomrule
 \end{tabular}
 \caption{Level setup for $h$-, $p$ and $hp$-MLMC (Example \ref{exp:cavity}).}
 \label{tab:cavitySetup}
\end{table}
\begin{figure}
 \centering
 \def\svgwidth{6.6cm}
 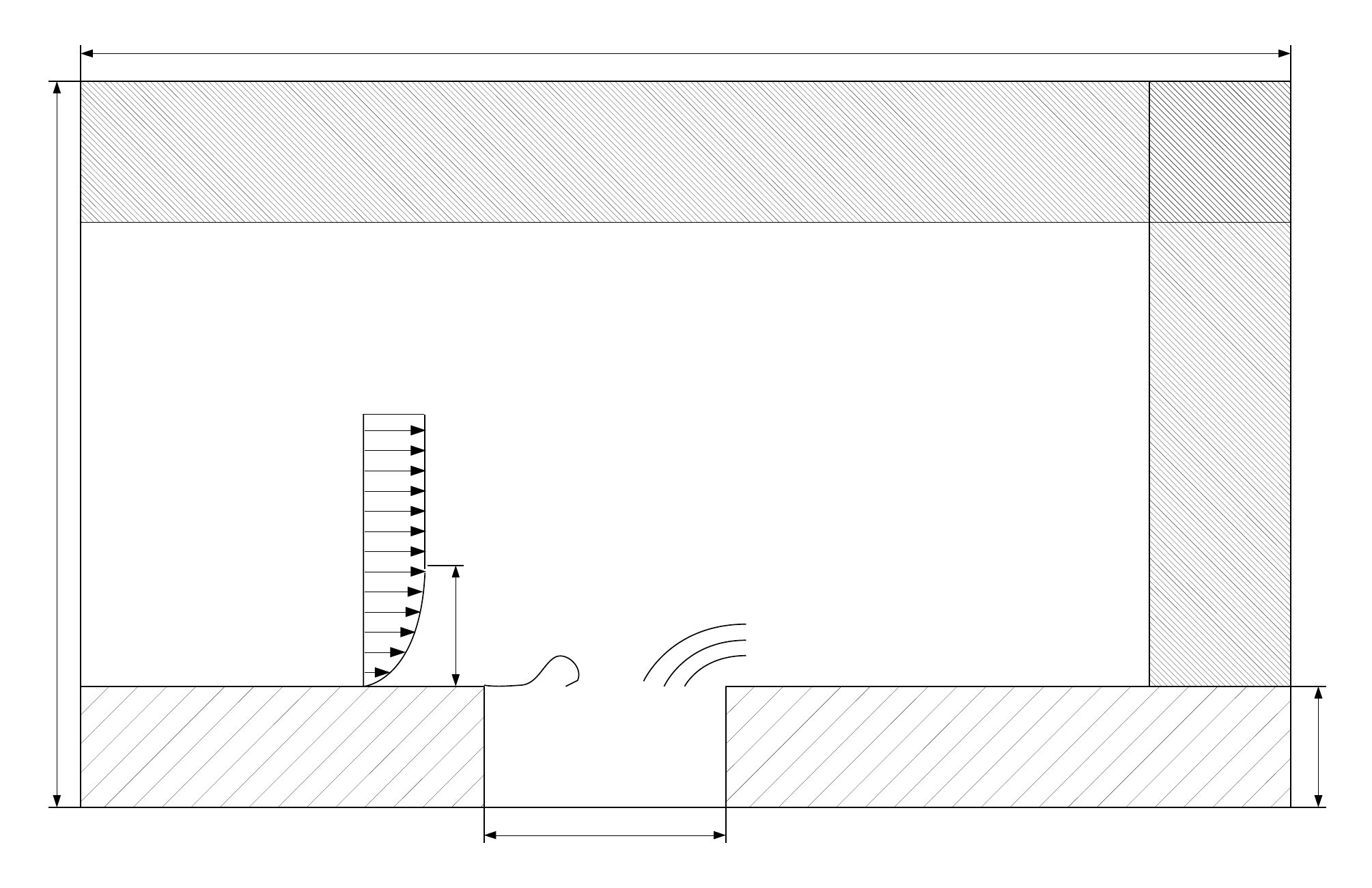
 \includegraphics[width=5.6cm,height=4.5cm]{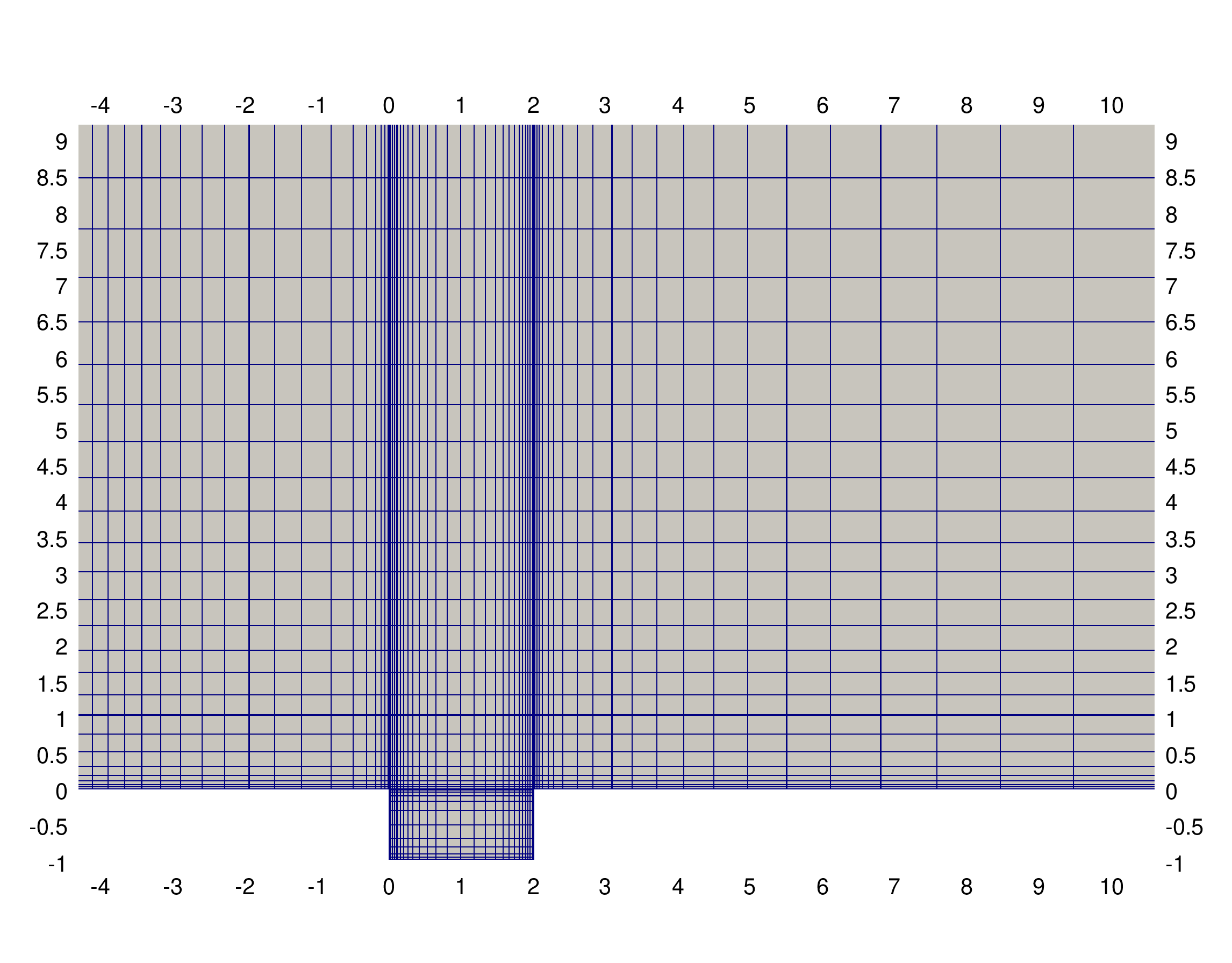}
  \caption{\textit{Left:} Schematic sketch of the open cavity setup with a laminar inflow boundary layer. All geometric
  parameters are adopted from \cite{Kuhn2018} and are non-dimensionalized by the cavity depth. \textit{Right:} Computational
  mesh on the finest level.  (Example \ref{exp:cavity}).  }
  \label{fig:cavity_sketch}
\end{figure}

Considering the bias error we can see that in \tikzfigref{fig:cavity_plots_a} for $p$-MLMC
the bias estimate $\|\E{Q(U_l)}-\E{Q(U_{l-1})}\|_{L^2}$ for $p$-MLMC is smaller than $\expnumber{2.5}{-5}$.
As all three methods share the same finest level, we can assume that the bias error from \eqref{eq:splittingMSE}
is satisfied for all three methods under consideration. 
\tikzfigref{fig:cavity_plots_b} shows the  estimated level variance $\hat{\sigma}_l^2$ across different levels.
All three methods yield a very good variance reduction, especially $p$-MLMC has already a very small variance on level two.
However, the computation of samples on the coarse grids is extremely costly for $p$-MLMC. Taking a closer look at the variance on 
level zero, we see that $hp$-MLMC achieves the same variance as $p$-MLMC but with much less DOFs. 
This yields the computational advantage of $h$- and $hp$-MLMC compared to $p$-MLMC (see \tikzfigref{fig:cavity_plots_c}) for this
open cavity problem.
The  asymptotic work is still optimal for all three methods, which can be seen in \tikzfigref{fig:cavity_plots_c}.
Finally, in \figref{fig:cavity_samples} we plot the number of computed samples for different tolerances
and the average work that is needed to compute one sample on the corresponding level. 
Again, for all three methods most of the computations are performed on the coarse levels as they have a low computational cost.
In contrast to the benchmark problem the average work does not scale as $ \mathcal{O}(\text{DOF}_l^{3/2})$ but more like 
$\mathcal{O}(\text{DOF}_l^{2})$, indicating that $\gamma_1\approx 4$. The main reason for this behavior is that 
the uncertain Mach number influences the time-step size because the speed of sound enters the eigenvalues of the 
Jacobian of the advective fluxes, hence a bigger pressure leads to a smaller time-step size. 
\begin{figure}
 \centering
 \begin{tikzpicture}

\begin{groupplot}[group style={group size=2 by 2,horizontal sep = 2cm,  vertical sep = 1.8cm},
	width=\figurewidth,
	height=\figureheight,
	scale only axis,
	]

%%%%%%%%%%%%%%%%%%%%%%%%%%%%%%%%%%%%%%%%%%%%%%%%%%%%%%%%%%%%%%%%%%%%%%%%%%%%%%%%%%
\nextgroupplot[
title = \tikztitle{Estimated bias},
xmode=linear,
xlabel={level},
xmin=0.5,
xmax=3.5,
xtick={1,2,3},
xminorticks=true,
ymode=log,
ylabel={\scriptsize $\|\E{Q(U_l)}-\E{Q(U_{l-1})}\|_{L^2}$},
yminorticks=true,
ymax=5e-3,
cycle list name=color,
legend pos=north east,
legend style={legend cell align=left, align=left, draw=white!15!black,font=\scriptsize},
legend columns = 3,
legend pos=north east,
]

\addplot [mark=o, mark options={solid, black}]
  table[row sep=crcr]{%
1		0.74956272E-03 \\
2		0.38697152E-03 \\
3		0.24195132E-03 \\
};
\addlegendentry{$h$}

\addplot [mark=triangle, mark options={solid, black}]
  table[row sep=crcr]{%
1		0.32567585E-04 \\
2		0.30296070E-04 \\
3		2.8434417E-05  \\
};
\addlegendentry{$p$}

\addplot  [mark=diamond*,color=black, mark options={solid, black}]
  table[row sep=crcr]{%
1	0.001176669 \\
2	0.0003209981 \\
3	0.0002289412 \\
};
\addlegendentry{$hp$}

%%%%%%%%%%%%%%%%%%%%%%%%%%%%%%%%%%%%%%%%%%%%%%%%%%%%%%%%%%%%%%%%%%%%%%%%%%%%%%%%%%%%%%%%%%%%%%%%%%%%%%%%%%%%
\nextgroupplot[
title = \tikztitle{Estimated variance},
xlabel={level},
ylabel={$\hat{\sigma}_l^2$},
ymode=log,
xmode=linear,
xmin=-0.5,
xmax=3.5,
xtick={0,1,2,3},
yminorticks=true,
ymin=1e-12,
ymax=1e-04,
tick pos=both,
legend style={legend cell align=left, align=left, draw=white!15!black, font=\scriptsize},
legend pos=north east,
cycle list name=black white,
legend columns = 3,
legend pos=north east,
]

\addplot[mark=o,solid,mark options={solid,scale=0.5},error bars/.cd, y dir=both, y explicit,error bar style={solid}] plot coordinates {
        (0,0.00000481)     -= (0,1.1206812697E-007)  += (0,1.1377691112E-007)
        (1,4.347111E-009)  -= (0,3.72435480028E-011) += (0,9.0066645287E-010)
        (2,6.3797549E-010) -= (0,7.1909553949E-011)  += (0,5.8423937777E-010)
        (3,1.8403646E-010)-= (0,8.8216355727E-011)  += (0,0.000000001)        
};
\addlegendentry{$h$}

\addplot[mark=triangle*,black,mark options={solid,scale=0.5},dashed,error bars/.cd, y dir=both, y explicit,error bar style={solid}] plot coordinates {
        (0,4.5715435E-006)  -= (0,1.1206812697E-007)  += (0,1.1788757393E-007)
        (1,1.1930733E-010)  -= (0,3.72435480028E-011) += (0,9.8501346497E-011)
        (2, 1.0938513E-010) -= (0,3.1909553949E-011)  += (0,2.06088020981E-010)
        (3,7.0763225E-011) -= (0,3.39197669255E-011) += (0,3.73964870393E-010)
};
\addlegendentry{$p$}

\addplot[mark=diamond*,black,mark options={solid,scale=0.5},dash dot,error bars/.cd, y dir=both, y explicit,error bar style={solid}] plot coordinates {
        (0,5.52E-006)   -= (0,0.000000061)        += (0,6.24301677200003E-008)
        (1,4.99E-007)  -= (0,1.2515544889E-008)  += (0,1.3181833018E-008)
        (2, 9.99E-010) -= (0,1.86642220413E-010) += (0,2.9872483427E-010)
        (3,4.20E-010) -= (0,1.36307341293E-010) += (0,3.84621783613E-010)        
};
\addlegendentry{$hp$}

%%%%%%%%%%%%%%%%%%%%%%%%%%%%%%%%%%%%%%%%%%%%%%%%%%%%%%%%%%%%%%%%%%%%%%%%%%%%%%%%%%%%%%%%%%%%%%%%%%%%%%%%%%%%
\nextgroupplot[
title = \tikztitle{Asymptotic runtime},
xmode=log,
xmin=0.00002,
xmax=0.002,
xminorticks=true,
xlabel style={font=\color{white!15!black}},
xlabel={$\epsilon$},
ymode=log,
ymin=0.01,
ymax=100,
yminorticks=true,
ylabel style={font=\color{white!15!black}},
y tick label style={/pgf/number format/1000 sep=},
ylabel={$\epsilon^2\text{W}_{\text{tot}}$},
axis background/.style={fill=white},
legend style={legend cell align=left, align=left, draw=white!15!black,font=\scriptsize},
legend pos=north west,
legend columns =3,
]

\addplot [color=black, mark=o, mark options={solid, black}]
  table[row sep=crcr]{%
0.001					2.228428 \\
0.0008					1.30228736 \\
0.0005					0.692919 \\
0.0001					0.44536527 \\
0.00008					0.4344829632 \\
0.00005					0.4781698225 \\
};
\addlegendentry{$h$}

\addplot [color=black, mark=diamond, mark options={solid, black}]
  table[row sep=crcr]{%
0.001					10.418502 \\
0.0008					6.27201856 \\
0.0005					5.00066425 \\
0.0001					1.82050242 \\
0.00008					1.7551533952 \\
0.00005					1.69750227 \\
};
\addlegendentry{$p$}

\addplot [color=black, mark=asterisk, mark options={solid, black}]
  table[row sep=crcr]{%
0.001				1.220292\\
0.0008				0.88595264 \\
0.0005				0.36826725\\
0.0001				0.47652013 \\
0.00008				0.4590465984 \\
0.00005				0.5593632175 \\
};
\addlegendentry{$hp$}

\end{groupplot}
\end{tikzpicture}
  \settikzlabel{fig:cavity_plots_a} \settikzlabel{fig:cavity_plots_b} \settikzlabel{fig:cavity_plots_c}
  \settikzlabel{fig:cavity_plots_d}
 \caption{Estimated bias, variance and asymptotic work. For $\hat{\sigma}_l^2$ we also plot the 95\% confidence
 interval  (Example \ref{exp:cavity}).}
 \label{fig:cavity_plot}
\end{figure}
\begin{figure}
 \centering
 \begin{tikzpicture}

\begin{groupplot}[group style={group size=2 by 3,horizontal sep = 1.5cm,  vertical sep = 1.7cm},
	width=\figurewidth,
	height=0.97\figureheight,
	scale only axis,
	]
	
\nextgroupplot[
title = \tikztitle{No. of samples, $h$-MLMC},
xlabel={level},
ylabel={no. samples},
ymode=log,
ymin=1,
ymax=50000,
xtick={0, 1, 2, 3},
xmin=-0.5,
xmax = 3.5,
yminorticks=true,
axis background/.style={fill=white},
legend columns=3,
cycle list name=black white,
]

\addplot 
  table[row sep=crcr]{%
0	22 \\
1	3 \\
2   3 \\
};
\addlegendentry{$\epsilon=\expnumber{1}{-3}$}

\addplot 
  table[row sep=crcr]{%
0	36 \\
1	3 \\
2	3 \\
};
\addlegendentry{$\epsilon=\expnumber{8}{-4}$}

\addplot 
  table[row sep=crcr]{%
0	86 \\
1	6 \\
2	3 \\
};
\addlegendentry{$\epsilon=\expnumber{5}{-4}$}

\addplot 
  table[row sep=crcr]{%
0	2372 \\
1	26 \\
2	6 \\
3	3 \\
};
\addlegendentry{$\epsilon=\expnumber{1}{-4}$}

\addplot 
  table[row sep=crcr]{%
0	3599 \\
1	66 \\
2	9 \\
3   4 \\
};
\addlegendentry{$\epsilon=\expnumber{8}{-5}$}

\addplot 
  table[row sep=crcr]{%
0	16560 \\
1	1043 \\
2	81 \\
3	6 \\
};
\addlegendentry{$\epsilon=\expnumber{5}{-5}$}
\legend{}

\nextgroupplot[
title = \tikztitle{No. of samples, $p$-MLMC},
xlabel={level},
ymode=log,
ymin=1,
ymax=50000,
ylabel={no. samples},
xtick={0, 1, 2, 3},
xmin=-0.5,
xmax = 3.5,
yminorticks=true,
axis background/.style={fill=white},
legend style={at={(1.95,-0.55)}, anchor=south east, legend cell align=left, align=left, draw=white!15!black, font=\scriptsize},
legend columns=3,
cycle list name=black white,
]

\addplot 
  table[row sep=crcr]{%
0	12 \\
1	3 \\
2   3 \\
};
\addlegendentry{$\epsilon=\expnumber{1}{-3}$}

\addplot 
  table[row sep=crcr]{%
0	27 \\
1	3 \\
2	3 \\
};
\addlegendentry{$\epsilon=\expnumber{8}{-4}$}

\addplot 
  table[row sep=crcr]{%
0	103 \\
1	6 \\
2	3 \\
};
\addlegendentry{$\epsilon=\expnumber{5}{-4}$}

\addplot 
  table[row sep=crcr]{%
0	1983 \\
1	8 \\
2	3 \\
};
\addlegendentry{$\epsilon=\expnumber{1}{-4}$}

\addplot 
  table[row sep=crcr]{%
0	3041 \\
1	10 \\
2	5 \\
};
\addlegendentry{$\epsilon=\expnumber{8}{-5}$}

\addplot 
  table[row sep=crcr]{%
0	7694 \\
1	23 \\
2	11 \\
3	5 \\
};
\addlegendentry{$\epsilon=\expnumber{5}{-5}$}
\legend{}

\nextgroupplot[
title = \tikztitle{No. of samples, $hp$-MLMC},
xlabel={level},
ymode=log,
ymin=1,
ymax=50000,
ylabel={no. samples},
xtick={0, 1, 2, 3},
xmin=-0.5,
xmax = 3.5,
yminorticks=true,
xtick={0, 1, 2, 3},
xmin=-0.5,
xmax = 3.5,
axis background/.style={fill=white},
legend style={at={(0.95,-0.95)}, anchor=south east, legend cell align=left, align=left, draw=white!15!black, font=\scriptsize},
legend columns=2,
cycle list name=black white,
]

\addplot 
  table[row sep=crcr]{%
0	26 \\
1	3 \\
2   3 \\
};
\addlegendentry{$\epsilon=\expnumber{1}{-3}$}

\addplot 
  table[row sep=crcr]{%
0	49 \\
1	3 \\
2	3 \\
};
\addlegendentry{$\epsilon=\expnumber{8}{-4}$}

\addplot 
  table[row sep=crcr]{%
0	113 \\
1	8 \\
2	3 \\
};
\addlegendentry{$\epsilon=\expnumber{5}{-4}$}

\addplot 
  table[row sep=crcr]{%
0	3549 \\
1	666 \\
2	8 \\
3	3 \\
};
\addlegendentry{$\epsilon=\expnumber{1}{-4}$}

\addplot 
  table[row sep=crcr]{%
0	5723 \\
1	1032 \\
2	13 \\
3   4 \\
};
\addlegendentry{$\epsilon=\expnumber{8}{-5}$}

\addplot 
  table[row sep=crcr]{%
0	15345 \\
1	2897 \\
2	37 \\
3	9 \\
};
\addlegendentry{$\epsilon=\expnumber{5}{-5}$}

\nextgroupplot[
title = \tikztitle{average work},
scale only axis,
xmode=log,
xminorticks=true,
ymode=log,
xlabel={DOF},
ylabel={work},
ymax=1e8,
ymin=1e3,
yminorticks=true,
axis background/.style={fill=white},
legend pos = north west,
legend style={at={(0.95,-0.95)}, anchor=south east, legend cell align=left, align=left, draw=white!15!black, font=\scriptsize},
legend columns=2,
]
\addplot  [black,solid,mark=asterisk,] 
  table[row sep=crcr]{%
17856	15328.847971\\
27072	44466.8674976\\
61248	521318.530076\\
127168	2614584\\
};
\addlegendentry{$h$-MLMC}

\addplot [black,solid,mark=triangle] 
  table[row sep=crcr]{%
49675	86476.0395922\\
71532	253896.24\\
97363	1246024\\
127168	2874591.6\\
};
\addlegendentry{$p$-MLMC}

\addplot [black,solid,mark=diamond*] 
  table[row sep=crcr]{%
6975	8345.89794721\\
15228	22073.1939247\\
46893	271656\\
127168	2408907.73333\\
};
\addlegendentry{$hp$-MLMC}

\addplot[black,dashed] 
  table[row sep=crcr]{%
6975	48650.625\\
15228	231891.984\\
46893	2198953.449\\
127168	16171700.224\\
};
\addlegendentry{$\mathcal{O}(\text{DOF}^{2})$}

\end{groupplot}

\end{tikzpicture}%
  \settikzlabel{fig:cavity_samples_a} \settikzlabel{fig:cavity_samples_b} \settikzlabel{fig:cavity_samples_c}
 \caption{Computed number of samples and  average work on each level (Example \ref{exp:cavity}).}
 \label{fig:cavity_samples}
\end{figure}

Summarizing the numerical experiments
in Sections \ref{exp:smooth} and \ref{exp:cavity}
we have seen that $h$-, $p$- and $hp$-MLMC  yield
an optimal asymptotic runtime and can be used for efficient UQ of the compressible Navier--Stokes equations.
However, we also observed that the $hp$-MLMC method does not outperform  the $h$-MLMC 
method, as long as the latter can be applied. 
Therefore, in order to increase the efficiency of the $hp$-MLMC method future work 
should aim at   fully $hp$-adaptive algorithms as for example in \cite{Detommaso2019,Eigel2016,Kornhuber2018}.
%SECTION
%%%%%%%%%%%%%%%%%%%%%%%%%%%%%%%%%%%%%%%%%%%%%%%%%%%%%%%%%%%%%%%%%%%%%%%%%%%%%%%%%%%%%%%%%%%%%%%%%%%%%%%%%%%%%%%%%%%%%%%%%%%%%%%%%%%%%%%%%%%%%%%%%%%%%%%%%%%%%%
\section{Conclusions}
In this article we have proposed the $hp$-MLMC method, a Discontinuous Galerkin based Multilevel Monte Carlo method,
where different levels consist of a hierarchy of uniformly refined spatial meshes in combination with a hierarchy of uniformly increasing DG polynomial degrees.
We generalized the complexity results from \cite{CliffeScheichl2011, AppeloeMOMC}
to arbitrarily $hp$-refined meshes. To account for the uncertainty of the optimal number of samples
on each level, we introduced a confidence interval, leading to a robust lower confidence bound 
for these quantities.
Our theoretical results are confirmed by numerical experiments for the two-dimensional compressible Navier-Stokes equations, including an extensive comparison between $h$-, $p$- and $hp$-MLMC methods. 
Finally, we applied our method to a challenging engineering 
problem from computational acoustics, demonstrating its capability to perform UQ for complex flow problems.
In order to improve the efficiency of the $hp$-MLMC method it should be advanced to an $hp$-\textrm{adaptive} method.
%%%%%%%%%%%%%%%%%%%%%%%%%%%%%%%%%%%%%%%%%%%%%%%%%%%%%%%%%%%%%%%%%%%%%%%%%%%%%%%%%%%%%%%%%%%%%%%%%%%%%%%%%%%%%%%%%%%%%%%%%%%%%%%%%%%%%%%%%%%%%%%%%%%%%%%%%%%%%%%
%%SUBSECTION
%%
%%%%%%%%%%%%%%%%%%%%%%%%%%%%%%%%%%%%%%%%%%%%%%%%%%%%%%%%%%%%%%%%%%%%%%%%%%%%%%%%%%%%%%%%%%%%%%%%%%%%%%%%%%%%%%%%%%%%%%%%%%%%%%%%%%%%%%%%%%%%%%%%%%%%%%%%%%%%%%%
\vspace*{-0.2cm}
\section*{Acknowledgments}
The authors  thank the anonymous referees and the associate editor for their helpful suggestions to  improve
the article.
\bibliographystyle{siamplain}
\bibliography{mybib_revision_3}

\end{document}